\newtheorem{theorem}{Theorem}[section] \numberwithin{equation}{section}
\newtheorem{corollary}[theorem]{Corollary}
\newtheorem{proposition}[theorem]{Proposition}
\newtheorem{conjecture}[theorem]{Conjecture}
\newtheorem{exam}[theorem]{Example}
\newenvironment{example}{\begin{exam}\rm}{\end{exam}}
\newtheorem{rem}[theorem]{Remark}
\newenvironment{remark}{\begin{rem}\rm}{\end{rem}}
\newtheorem{definition}[theorem]{Definition}
\providecommand{\customgenericname}{}
\newcommand{\newcustomtheorem}[2]{%
	\newenvironment{#1}[1]
	{%
		\renewcommand\customgenericname{#2}%
		\renewcommand\theinnercustomgeneric{##1}%
		\innercustomgeneric
	}
	{\endinnercustomgeneric}
}
\renewcommand\emptyset{\varnothing}
\newcommand\commentout[1]{}
\newcommand\vol{\operatorname{vol}} 
\newcommand\conv{\operatorname{conv}}
\newcommand\ehr{\operatorname{ehr}} 
\newcommand\Ehr{\operatorname{Ehr}}
\newcommand\Aut{\operatorname{Aut}}
\newcommand\ZZ{\mathbb{Z}}
\newcommand\RR{\mathbb{R}}
\newcommand\KK{\mathbb{K}}
\newcommand\cA{\mathcal{A}}
\newcommand\cC{\mathcal{C}}
\newcommand\cM{\mathcal{M}}
\newcommand\cP{\mathcal{P}}
\newcommand\Zono{\mathcal{Z}}
\newcommand\mc[1]{\mathcal{#1}}
\newcommand\Poly{\mathcal{P}}
\newcommand\bV{\mathbf{V}}
\newcommand\be{\mathsf{e}}
\newcommand\bu{\mathbf{u}}
\newcommand\bv{\mathbf{v}}
\newcommand\bx{\mathbf{x}}
\newcommand\bzero{\mathbf{0}}
\newcommand\wC{\widetilde{C}}
\newcommand\wX{\widetilde{X}}
\newcommand\wY{\widetilde{Y}}
\newcommand\wZ{\widetilde{Z}}
\newcommand\walpha{\widetilde{\alpha}}
\begin{document}

\title{Oriented Matroid Circuit Polytopes}

\author{
\textsf{Laura Escobar\footnote{\noindent \textsf{University of California, Santa Cruz; lauraescobar@ucsc.edu.}}}
\textsf{ and Jodi McWhirter\footnote{\noindent \textsf{jodi.mcwhirter@wustl.edu.}}}
}

\maketitle

\begin{abstract}
Matroids give rise to several natural constructions of polytopes.
Inspired by this, we examine polytopes that arise from the signed circuits of an oriented matroid. We give the dimensions of these polytopes arising from graphical oriented matroids and their duals. Moreover, we consider polytopes constructed from cocircuits of oriented matroids arising from the positive roots of type $A$ root systems. We give an explicit description of their face structure and determine the Ehrhart series. 
We also study an action of the symmetric group on these polytopes, giving a full description of the subpolytopes fixed by each permutation.
These type $A$ polytopes are graphic zonotopes, are polar duals of symmetric edge polytopes, and appear in Stapledon’s paper introducing equivariant Ehrhart theory.
\end{abstract}


\section{Introduction}

Polytopes have been a key object in the study of matroids. There are various ways that one can construct a polytope from a matroid.
For example, Barahona and Gr\"otschel defined in \cite{BG} the cycle polytope of a matroid.
Since matroid polytopes are arithmetically interesting, see e.g.~\cite{deloerahawskoeppe}, Matthias Beck posed the question in 2021 to study the arithmetic of polytopes constructed from the indicator vectors of the signed circuits of an oriented matroid.
We study these oriented matroid circuit (OMC) polytopes. Using the circuit axioms, we show that every circuit yields a vertex of the corresponding OMC polytope and that every OMC polytope is centrally symmetric. Turning to graphical and co-graphical oriented matroids, we determine the dimension of the corresponding OMC polytopes. We also describe the effect that some special graph edges, namely bridges and loops, have on the OMC polytope. 
Integral combinations of circuits of an oriented matroid have been arranged into a lattice by \cite{antisymmetricflows} to generalize flows. We take the convex hull of the indicator vectors of circuits, which are elements of this lattice.

We give more detailed information on a family of OMC polytopes that arise from the classical type $A_{n}$ root system, which we call $\Poly_{n}$. Aside from being an OMC polytope, $\Poly_{n}$ appears in other families of polytopes. For instance, we show that it is affinely isomorphic to the graphic zonotope $\Zono_{C_{n+1}}$, where $C_{n+1}$ is the cycle graph on $n+1$ nodes. It is also a tropical unit ball, as seen in \cite{criadotropicalbisectors}, which is the polar dual of the symmetric edge polytope for the complete graph $K_{n+1}$. We describe the face structure of $\Poly_{n}$.

We also study the Ehrhart theory of $\Poly_{n}$. Moreover, the symmetric group $S_{n+1}$ action on the graph $K_{n+1}$ induces an $S_{n+1}$ action on the vertices of $\Poly_{n}$. We describe each subpolytope $\Poly_{n}^\sigma$ of points of $\Poly_{n}$ that are fixed by a $\sigma\in S_{n+1}$. These fixed polytopes fit in the larger picture of equivariant Ehrhart theory, introduced by Stapledon in \cite{stapledonequivariant}. 
 It is of recent interest to compute the equivariant Ehrhart theory of polytopes that admit a group action, see e.g.  \cite{ardilaschindlervindas,ardilasupinavindas,ClaHigKol,equivarianttechniques}, in part due to a conjecture by Stapledon \cite[Conjecture 12.1]{stapledonequivariant}.
In fact, $\Poly_{n}$ also appears in \cite{stapledonequivariant}, where Stapledon connects its equivariant Ehrhart theory to the cohomology of the toric variety of the permutahedron. We use the fixed polytopes of $\Poly_{n}$ to compute the equivariant Ehrhart theory in a different way.

A natural question is to study these polytopes for other Coxeter types. First, we observe that types $B_n$ and $C_n$ yield the same polytopes: since signed circuits encode the positive and negative coefficients of linear dependencies of vectors, the set of signed circuits of $\cM(B_n)$ and $\cM(C_n)$ is the same, resulting in the same OMC polytopes. While it would be interesting to study the OMC polytopes of these other Coxeter types, these polytopes are very high-dimensional in types $B_n/C_n$ and $D_n$, as referenced in Example \ref{eg:typeB}, which leads to some challenges.

We now give an outline of the paper. In \Cref{sec_ocm_def}, we introduce some background on oriented matroids and define oriented matroid circuit (OMC) polytopes in Definition \ref{def:omcpolytope}.
Proposition \ref{thm_graph_dim} and Proposition \ref{thm_dualgraph_dim} give the dimension of the OMC polytope for graphical and co-graphical oriented matroids, respectively.

\Cref{sec_dual_a} examines the OMC polytope $\Poly_{n}$ coming from the family of oriented matroids dual to those determined by the positive roots of the type $A_{n}$ root system. This family is also co-graphical, being dual to the graphical oriented matroids determined by the complete graph $K_{n+1}$. In \Cref{sec_other_appearances}, we find $\Poly_{n}$ in the context of graphic zonotopes \cite{Gru} and symmetric edge polytopes \cite{sepgentoregmats}. \Cref{thm_faces} and Corollary \ref{cor_face_lattice} in \Cref{sec_face_structure} connect the face structure of $\Poly_{n}$ to the Boolean lattice on $[n+1]$.  

In \Cref{sec_ehrhart}, we study the Ehrhart theory of $\Poly_{n}$, which is given specifically in Proposition \ref{ehrdA}. 
Inspired by this case as well as computational evidence using Finschi's catalog oriented matroids \cite{FinschiCatalogOM}, we conjecture in Conjecture \ref{conj} that the $h^*$-vector of every oriented matroid circuit polytope is log-concave.
Sections~\ref{sec_eet} and \ref{sec_2eet} concern equivariant Ehrhart theory. In \Cref{sec_eet}, we fully describe the $S_{n+1}$-action on $\Poly_n$ and, in \Cref{thm_fixedpolytopes}, the polytopes $\Poly_n^\sigma$ fixed by each $\sigma\in S_{n+1}$. Using these fixed polytopes, Corollary \ref{itsapolynomial} gives explicitly the equivariant $H^*$-series of $\Poly_n$ under our $S_{n+1}$ action. 
Finally, in \Cref{sec_2eet}, we compare the fixed polytopes of our $S_{n+1}$ action on $\Poly_{n}$ to the fixed polytopes of a different group action coming from graphic zonotopes.



\section{Oriented matroid circuit polytopes}\label{sec_ocm_def}

We begin by giving the background on oriented matroids, following \cite[Section 3]{orientedmatroids}.
A \textbf{signed set} $\widetilde{X}$ is a set $X$ together with a partition $(X^+,X^-)$ of $X$.
The \textbf{opposite} of $\widetilde{X}$ is the signed set $-\wX=((-X)^+,(-X)^-)$ with $(-X)^+:=X^-$ and $(-X)^-:=X^+$.

\begin{definition}\label{def:OM}
A set $\cC$ of signed sets is the set of \textbf{circuits} of an \textbf{oriented matroid} if the following hold:
\begin{itemize}
		\item (C0) The empty signed set $(\emptyset,\emptyset)$ is not a circuit
		\item (C1) If $\wX$ is a circuit, then so is $-\wX$
		\item (C2) 	For all $\wX,\wY\in\mc{C}$, if ${X}\subseteq{Y}$, then $\wX=\wY$ or $\wX=-\wY$
		\item (C3) If $\wX,\wY$ are circuits with $\wX\neq -\wY$ and $e\in X^+\cap Y^-$, then there is a third circuit $\wZ$ such that 
			$Z^+\subseteq (X^+\cup Y^+)\setminus\{e\}$ and 
			$Z^-\subseteq (X^-\cup Y^-)\setminus\{e\}$.
	\end{itemize}
\end{definition}
In the literature, these ``circuits'' are often referred to as ``signed circuits''; since we are rarely dealing with unsigned circuits, we will just use ``circuits.''

Representable oriented matroids are given by lists of vectors in a vector space as follows. 
Let $\KK$ be a totally ordered field.
Given a matrix $M\in \KK^{d\times n}$ with column vectors $\bv_1,\ldots,\bv_n$, one associates an oriented matroid as follows.
A linear dependence of $M$ is $(\lambda_1,\ldots,\lambda_n)\in\KK^n$ such that $\sum_{i=1}^n\lambda_i\bv_i=\bzero$.
A linear dependence is minimal if there is no linear dependence $(\lambda'_1,\ldots,\lambda'_n)$ such that
$$\{i\in[n]\mid \lambda'_i\neq 0\}\subsetneq \{i\in[n]\mid \lambda_i\neq 0\}.$$
Given a minimal linear dependence $(\lambda_1,\ldots,\lambda_n)\in\KK^n$ we associate the signed set $\widetilde{X}=(X^+,X^-)$ given by
\begin{equation*}
    X^+=\{i\in[n]\mid \lambda_i>0\},\qquad
    X^-=\{i\in[n]\mid \lambda_i<0\}.
\end{equation*}
The circuits of the oriented matroid $\cM(M)$ consist of the signed sets associated to minimal linear dependencies.
Moreover, an oriented matroid $\cM$ is \textbf{representable over $\KK$} if $\cM=\cM(M)$ for some matrix $M$ with entries in $\KK$.

For example, consider the matrix
\begin{align*}
	M = \begin{pmatrix}
		0  & -1 & -1 & 0 & 0 & 0 \\
		14 & -1 & -9 & 0 & 0 & 0 \\
		1  &  5 & -1 & 1 & 0 & 1
	\end{pmatrix}.
\end{align*}
We use the numbers $1,2,\dots,6$ to denote the columns of the matrix and, to simplify notation, often use, for instance, $136$ to denote $\{1,3,6\}$. The circuits of $\cM(M)$ are $(5,\emptyset)$, $(\emptyset, 5)$, $(4,6)$, $(6,4)$, $(134,2)$, $(2,134)$, $(136,2)$, $(2,136)$. 
Another way to denote circuits is through strings of $+$s, $-$s, and $0$s. For example, the circuit $(134,2)$ would be written as $(+,-,+,+,0,0)$.

As a second example, consider the matrix 
\begin{align*}
	M = \begin{pmatrix}
		1 &  1 & 1 &  1 & 0 & 0  \\
		1 & -1 & 0 &  0 & 1 &  1 \\
		0 &  0 & 1 & -1 & 1 & -1
	\end{pmatrix},
\end{align*}
whose columns are the type $D_3$ positive roots. The oriented matroid $\mc{M}(\mc{D}_3)$ corresponding to the matrix $M$ has 14 circuits, including $(1,36)$ and $(36,45)$.

Let us describe the notion of a dual oriented matroid in the context of representable matroids.
The oriented matroid $\cM^*(M)$ which is dual to $\cM(M)$ is given as follows.
Given $\bu\in\mathrm{rowsp}(M)$, the \textbf{signed support} of $\bu$ is the signed set $\widetilde{X}=(X^+,X^-)$ where
    $$
    X^+=\{i\in [n] \mid u_i>0\},\qquad
    X^-=\{i\in [n] \mid u_i<0\}.
    $$
The circuits of $\cM^*(M)$ are the inclusion-minimal non-empty signed supports of the vectors in $\mathrm{rowsp}(M)$,
see e.g.\ \cite[Lemmas 4.1.34 and 4.1.38]{deloerahemmeckekoeppe}.

\subsection{Oriented matroid circuit polytopes}

Having introduced the necessary background, we are now ready to define the polytopes we study.
Let $\mc{M}$ be an oriented matroid with circuit set $\mc{C}$ and \textbf{ground set} $E$, i.e. for each $\widetilde{X}\in \mc{C}$, $X\subseteq E$.
For each signed set $\wX$ with $X\subseteq E$, define the \textbf{signed incidence vector} $\textbf{v}_{\wX}\in\RR^E$ by
\begin{align*} (\textbf{v}_{\widetilde{X}})_j := 
	\begin{cases}
		0, & j \notin {X} \\
		1, & j\in X^+ \\
		-1, & j\in X^-
	\end{cases}
.\end{align*}

We now introduce the main definition of our paper.

\begin{definition}\label{def:omcpolytope}
The \textbf{oriented matroid circuit (OMC) polytope} $P_\cM\subseteq\RR^E$ associated to the matroid $\cM$ with ground set $E$ and nonempty circuit set $\cC$ is the convex hull of all such $\textbf{v}_{\wX}$, that is,
\begin{align*}
	P_\cM = \conv\left\{\textbf{v}_{\wX} \mid \wX\in\cC\right\}.
\end{align*}
\end{definition}

\begin{remark} \label{rem:reverseorientation}
Let $\cM$ be an oriented matroid with ground set $E$.
By \cite[Remark~3.2.3]{orientedmatroids}, given $A\subseteq E$ we can consider the oriented matroid $\cM'$ obtained by reversing the orientation of the elements of $A$.
Note that the linear map
	$$
	\RR^E\to\RR^E,
	\qquad 
	x_e\mapsto
	\begin{cases}
	-x_e,&e\in A\\
	x_e,&e\notin A
	\end{cases}
	$$
witnesses that $P_\cM$ and $P_{\cM'}$ are unimodularly equivalent polytopes.
Also, note that if $\cM_1$ and $\cM_2$ are oriented matroids with circuits $\cC_1,\cC_2$, then 
\[
P_{\cM_1\oplus \cM_2}=\conv \left((P_{\cM_1}\times\{\bzero\}) \cup (\{\bzero\}\times P_{\cM_2}\right)
,\] 
where, $\cM_1\oplus \cM_2$ denotes the direct sum of the matroids.
\end{remark}

We start by describing the vertices of OMC polytopes.

\begin{proposition}\label{vertices}
	The vertices of $P_\cM$ are precisely those $\textbf{v}_{\wX}$ such that $\wX$ is a circuit of $\cM$.
\end{proposition}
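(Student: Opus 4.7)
The plan is to show directly that each $\mathbf{v}_{\wX}$ is an extreme point of the convex hull $P_{\cM} = \conv\{\mathbf{v}_{\wY} \mid \wY \in \cC\}$; since the vertex set of a polytope is contained in any generating set, this suffices.

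Fix a circuit $\wX \in \cC$ and suppose we have a convex decomposition
\[
\mathbf{v}_{\wX} = \sum_{i} \lambda_i\, \mathbf{v}_{\wY_i},
\qquad \lambda_i > 0,\quad \sum_i \lambda_i = 1,\quad \wY_i \in \cC.
\]
The first step is a coordinate-wise analysis. Each entry of every $\mathbf{v}_{\wY_i}$ lies in $\{-1,0,1\}$. If $j \in X^+$, then the $j$-th coordinate on the left is $1$, which forces $(\mathbf{v}_{\wY_i})_j = 1$ for every $i$ (since a convex combination of numbers in $[-1,1]$ equals $1$ only when each summand does). Hence $j \in Y_i^+$ for all $i$. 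Symmetrically, $j \in X^-$ implies $j \in Y_i^-$ for all $i$. In particular, as unsigned sets, $X \subseteq Y_i$ and moreover $X^+ \subseteq Y_i^+$ and $X^- \subseteq Y_i^-$.

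Now I would invoke circuit axiom (C2): since $X \subseteq Y_i$ and both are supports of circuits, either $\wY_i = \wX$ or $\wY_i = -\wX$. The second case is ruled out using axiom (C0): since $\wX$ is a circuit, $X = X^+ \cup X^-$ is nonempty, so pick any $j \in X$. If $j \in X^+$, the inclusion $X^+ \subseteq Y_i^+ = X^-$ would give $j \in X^+ \cap X^- = \emptyset$, a contradiction; the case $j \in X^-$ is identical. Thus $\wY_i = \wX$ for every $i$, proving that $\mathbf{v}_{\wX}$ admits only the trivial convex decomposition, i.e.\ it is a vertex of $P_{\cM}$.

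There is no real obstacle here; the only subtlety is making sure (C2) is applied correctly and that the ``opposite'' case $\wY_i = -\wX$ is ruled out using the fact that $X^+$ and $X^-$ are disjoint together with (C0). The argument uses only the convex-combination structure and axioms (C0) and (C2), so it applies uniformly to every oriented matroid with a nonempty circuit set.
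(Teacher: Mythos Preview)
Your proof is correct and follows essentially the same route as the paper's: both arguments reduce to showing that if $\mathbf{v}_{\wY}$ ``matches'' $\mathbf{v}_{\wX}$ on the support of $\wX$ then $X^+\subseteq Y^+$ and $X^-\subseteq Y^-$, and then invoke (C2) and (C0) to conclude $\wY=\wX$. The only cosmetic difference is that the paper packages this via the linear functional $\mathbf{v}_{\wX}\cdot x$, exhibiting an explicit supporting hyperplane, whereas you verify extremality directly from the convex-combination definition; your version is in fact a bit more careful in ruling out the $\wY_i=-\wX$ case, which the paper leaves implicit.
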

\begin{proof}
	Suppose $\cM$ is an oriented matroid, $\cC$ is its set of circuits, $V = \{\textbf{v}_{\wX}\mid \wX\in\cC\}$, and $P$ is the OMC polytope. We would like to show that each $\textbf{v}_{\wX}\in V$ is a vertex of $P$.
	
	Let $\wX\in\mc{C}$ be a circuit with $X^+ = \{a_1,\dots,a_k\}$ and $X^- = \{b_1,\dots,b_l\}$. 
	Now, consider $x\in P$. If we take the dot product of $x$ with the direction vector $\textbf{v}_{\wX}$, we get
	\begin{align*}
	    \textbf{v}_{\wX}\cdot x 
	    &= (x_{a_1} + \dots + x_{a_k}) - (x_{b_1} + \dots + x_{b_l}) \\
	    &\leq k + l
	\end{align*}
	since $|x_m|\leq 1$. If $x = \textbf{v}_{\wX}$, then $\textbf{v}_{\wX}\cdot x = \textbf{v}_{\wX}\cdot \textbf{v}_{\wX} = k+l$. In particular, the linear functional $\mathbf v_{\wX}\cdot x$ attains its maximum over $P$ at $x=\mathbf v_{\wX}$ and thus lies on a face of $P$. Now, consider $\wY\in\mc{C}$ and suppose $\textbf{v}_{\wX} \cdot \textbf{v}_{\wY} = k+l$ as well. It follows that $X^+\subseteq Y^+$ and $X^-\subseteq Y^-$. Thus ${X}\subseteq {Y}$. By circuit axiom (C2), this implies $\wX = \wY$. In particular, $\textbf{v}_{\wX}$ is a vertex of $P$.
\end{proof}

The following is immediate.

\begin{corollary}
For any oriented matroid $\cM$, the polytope $P_\cM$ is centrally symmetric, i.e. $x\in P_\cM$ implies $-x\in P_\cM$.
\end{corollary}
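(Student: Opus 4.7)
The plan is to use circuit axiom (C1) together with the definition of the signed incidence vector to show that the vertex set of $P_\cM$ is closed under negation, and then propagate this symmetry to all of $P_\cM$ via convex combinations.

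First, I would observe that for any signed set $\widetilde{X}$ we have $\mathbf{v}_{-\widetilde{X}} = -\mathbf{v}_{\widetilde{X}}$, which is immediate from the definition: swapping $X^+$ and $X^-$ simply flips the sign of every nonzero coordinate of the signed incidence vector. Combined with (C1), which says $-\widetilde{X}\in\cC$ whenever $\widetilde{X}\in\cC$, this shows that the vertex set $\{\mathbf{v}_{\widetilde{X}}:\widetilde{X}\in\cC\}$ identified in \Cref{vertices} is closed under the map $\mathbf{v}\mapsto -\mathbf{v}$.

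Next, I would take an arbitrary $x\in P_\cM$ and write it as a convex combination of vertices, $x=\sum_{i}\lambda_i\mathbf{v}_{\widetilde{X}_i}$ with $\lambda_i\geq 0$ and $\sum_i\lambda_i=1$. Then $-x=\sum_i\lambda_i(-\mathbf{v}_{\widetilde{X}_i})=\sum_i\lambda_i\mathbf{v}_{-\widetilde{X}_i}$, which is a convex combination of vertices of $P_\cM$ by the previous paragraph, hence lies in $P_\cM$.

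There is no real obstacle here; the content of the statement is entirely captured by axiom (C1) together with the linear dependence of $\mathbf{v}_{\widetilde{X}}$ on the sign pattern of $\widetilde{X}$. In fact, this corollary could equivalently be phrased as a special case of \Cref{rem:reverseorientation}: taking $A=E$ reverses the orientation of every element, which exchanges each circuit with its opposite and sends $\mathbf{v}_{\widetilde{X}}$ to $-\mathbf{v}_{\widetilde{X}}$, so the unimodular equivalence in that remark becomes the identification of $P_\cM$ with $-P_\cM$.
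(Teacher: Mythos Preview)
Your argument is correct and is exactly the intended one: the paper records this corollary as ``immediate'' from \Cref{vertices} and axiom (C1), and you have simply written out those details. The extra observation connecting it to \Cref{rem:reverseorientation} with $A=E$ is also valid and a nice way to see the symmetry.
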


\begin{remark}\label{rem_dimension_general}
Note that since $P_\cM$ is centrally symmetric, the origin lies in its relative interior so that its dimension equals the dimension of the vector space spanned by its vertices. 
Some facts are known about this dimension, see e.g.~\cite[Exercise~4.45]{orientedmatroids}. As a consequence, for any oriented matroid with ground set $E$ and rank $r$, $\dim(P_\cM)\ge |E|-r$. 
However, it is an open problem to find the dimension of this vector space for general oriented matroids.
\end{remark}

The following proposition implies that the only lattice points on the edges of an OMC polytope are vertices. 

\begin{proposition}
For any oriented matroid $\cM$ and distinct $\wX,\wY\in\cC$ we have 
$$
(\textbf{v}_{\wX},\textbf{v}_{\wY})\cap \ZZ^E\subseteq \{0\}
,$$
where $(\textbf{v}_{\wX},\textbf{v}_{\wY}):=\{\lambda\textbf{v}_{\wX}+(1-\lambda)\textbf{v}_{\wY}\mid 0< \lambda< 1\}$.
\end{proposition}

\begin{proof}
Suppose that $(\textbf{v}_{\wX},\textbf{v}_{\wY})\cap \ZZ^E\neq \varnothing$.
Note that since $\textbf{v}_{\wX}$ and $\textbf{v}_{\wY}$ are lattice points in the cube $[-1,1]^E$, the existence of such a lattice point implies $|(\textbf{v}_{\widetilde{X}})_i|=|(\textbf{v}_{\widetilde{Y}})_i|$ for all $i$, i.e., $X=Y$.
It follows from Axiom (C2) in Definition \ref{def:OM} that  $\wX=-\wY$. We conclude that $(\textbf{v}_{\wX},\textbf{v}_{\wY})\cap \ZZ^E=\{0\}$, as desired.
\end{proof}

We remark that the analogous statement does not hold for larger collections of vertices as the following example shows.

\begin{example}\label{eg:notTerminal}
Let $\cM=\cM(M)$, where 
\begin{equation*}
M = \begin{pmatrix}
		1  & 0 & 0 & 10 & 1 & -1 & -9 \\
		0 & 1 & 0 & 1 & 1 & -2 & -2 \\
		0& 0 & 1  &  1 & 0 & 1 & -1
	\end{pmatrix}.
\end{equation*}
Label the columns of $M$ by $\textbf{w}_1,\ldots,\textbf{w}_7$ and let $\wX_1,\ldots, \wX_4$ be the circuits associated to the following minimal linear dependencies
\begin{align*}
&10\textbf{w}_1+\textbf{w}_2+\textbf{w}_3-\textbf{w}_4=\bzero, \qquad 6\textbf{w}_2-10\textbf{w}_5-\textbf{w}_6-\textbf{w}_7=\bzero,\\
 &\textbf{w}_2-\textbf{w}_3+\textbf{w}_5+\textbf{w}_6=\bzero, \quad\text{and}\quad -\textbf{w}_1+\textbf{w}_2+\textbf{w}_4+\textbf{w}_7=\bzero.
\end{align*}
We then have that $\frac{1}{4}(\textbf{v}_{\wX_1}+\cdots+\textbf{v}_{\wX_4})=(0,1,0,0,0,0,0)\in P_\cM\cap\ZZ^7$ is not a vertex of the polytope.

\end{example}


\subsection{Graphical oriented matroid circuit polytopes}\label{sec_graphical}

In this section we concentrate on graphical oriented matroids. 
Given a graph $G=(V,E)$ let $\widetilde G$ be a directed graph obtained by orienting each edge of $G$.
The circuits of the oriented matroid $\cM(\widetilde{G})$ are obtained from the cycles of $G$ as follows.
Given a cycle $C$ of $G$, choose a way to traverse $C$. We obtain the circuits $\wC=(C^+,C^-)$ and $-\wC$ by setting 
    \begin{align*}
    C^+&:=\{e\in E\mid e \text{ is traversed following its orientation}\}
    \\
    C^-&:=\{e\in E\mid e \text{ is traversed opposite to its orientation}\}
    .\end{align*}
The circuits of $\cM(\widetilde{G})$ are obtained by constructing the circuits above for all cycles of $G$.
It is well known that $\cM(\widetilde{G})$ is represented over any ordered field by its incidence matrix.

For example, consider the graph in 
\begin{figure}[h!]
	\centering
	\includegraphics[width=0.17\textwidth]{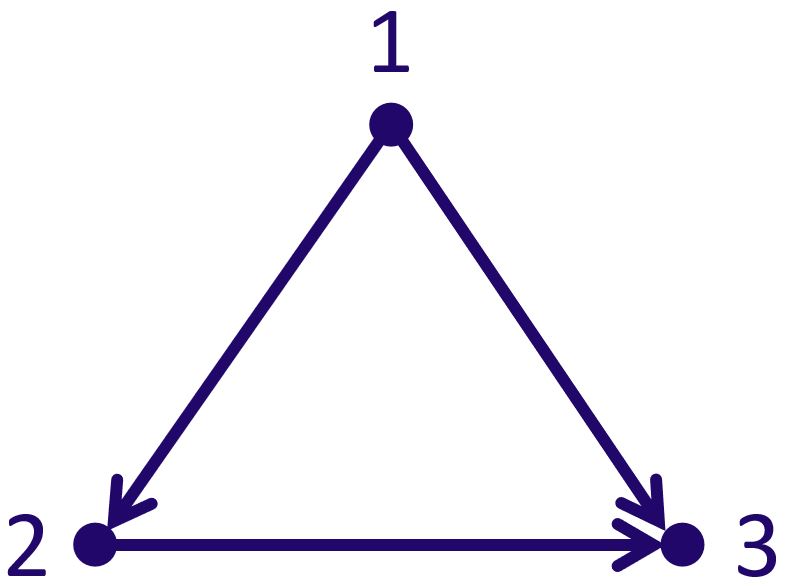}
	\caption{The complete graph $\widetilde{K_3}$ on 3 nodes, oriented so that the arrow points at the larger number.}
	\label{a2graph}
\end{figure}
\Cref{a2graph}. The oriented matroid $\cM(\widetilde{K_3})$ has two circuits: $(\{\vec{12},\vec{23}\},\{\vec{13}\})$ and $(\{\vec{13}\},\{\vec{12},\vec{23}\})$, which can also be represented, under the indexing $(\vec{12},\vec{13},\vec{23})$, as $(+,-,+)$ and $(-,+,-)$. The OMC polytope $P_{\cM(\widetilde{K_3})}$ is therefore a line segment in $\RR^3$.

We now give the dimension of $P_{\cM(\widetilde{G})}$.

\begin{proposition}\label{thm_graph_dim}
	Let $G = (V,E)$ be a graph and $\widetilde{G}$ be a directed graph obtained by orienting each edge of $G$. 
	The dimension of $P_{\cM(\widetilde{G})}$ is $|E|-|V|+k$, where $k$ is the number of connected components of $G$.
\end{proposition}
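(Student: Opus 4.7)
The plan is to sandwich $\dim P_{\cM(\widetilde G)}$ between two copies of $|E|-|V|+k$, using Remark~\ref{rem_dimension_general} for the lower bound and a direct kernel computation for the upper. Since $P_{\cM(\widetilde G)}$ is centrally symmetric, Remark~\ref{rem_dimension_general} puts the origin in its interior and identifies $\dim P_{\cM(\widetilde G)}$ with the dimension of the linear span of the vertices $\{\mathbf{v}_{\widetilde C}\}_{\widetilde C\in\cC}$; it also supplies the bound $\dim P_{\cM(\widetilde G)}\geq|E|-r$, where $r$ is the matroid rank. The graphical matroid $\cM(\widetilde G)$ has rank $r=|V|-k$ (a spanning forest of $G$ uses exactly that many edges), which yields the lower bound $|E|-|V|+k$ immediately.

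For the matching upper bound, I would identify the linear span of the signed incidence vectors with a subspace of the kernel of the signed vertex-edge incidence matrix $A$ of $\widetilde G$ (the $|V|\times|E|$ matrix with entry $+1$ at $(v,e)$ when $v$ is the head of $e$, $-1$ when $v$ is the tail, and $0$ otherwise). Because $A$ represents $\cM(\widetilde G)$, the construction of representable circuits recalled in Section~\ref{sec_ocm_def} shows that each $\mathbf{v}_{\widetilde C}$ is (up to sign) a minimal linear dependence of the columns of $A$, and so lies in $\ker A$. Hence
\[
\dim P_{\cM(\widetilde G)} \;\leq\; \dim \ker A \;=\; |E|-\mathrm{rank}(A)\;=\;|E|-(|V|-k),
\]
where the last equality is the standard rank formula for the directed incidence matrix: within each of the $k$ connected components the rows of $A$ sum to zero, and deleting one row from each component leaves $|V|-k$ linearly independent rows.

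There is no real obstacle here: the argument just assembles Remark~\ref{rem_dimension_general} with two standard facts about graphical matroids, namely the rank formula and the observation that along any traversal of a cycle each vertex is entered as many times as it is exited (which is why $\mathbf{v}_{\widetilde C}\in\ker A$). The only small care point is to check that the argument is robust to multiple edges and loops, which it is, since a loop contributes an all-zero column to $A$, hence $1$ to both $|E|$ and $\dim\ker A$, matching the formula.
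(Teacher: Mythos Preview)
Your proof is correct and follows essentially the same line as the paper's: both invoke Remark~\ref{rem_dimension_general} to identify $\dim P_{\cM(\widetilde G)}$ with the span of the circuit vectors, and both use the rank formula $r=|V|-k$ for graphical matroids. The only difference is that the paper cites \cite[Exercise~4.45]{orientedmatroids} directly for the equality $\dim=|E|-r$, whereas you unpack that equality yourself by sandwiching with the kernel of the incidence matrix; your version is thus slightly more self-contained but not substantively different.
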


\begin{proof}
As discussed in \Cref{rem_dimension_general}, we want to compute the dimension of the vector space spanned by the vertices of $P_{\cM(\widetilde{G})}$.
By \cite[Exercise~4.45]{orientedmatroids}, this dimension is $|E|-r$, where $r$ is the rank of $\cM(\widetilde{G})$.
The proposition follows, since the rank of $\cM(\widetilde{G})$ equals the number of edges in a spanning forest, i.e. $r=|V|-k$.
\end{proof}

\subsection{Graphical oriented matroid cocircuit polytopes}

Let $G=(V,E)$ be a graph. 
The following can be found in \cite[\S~1.1]{orientedmatroids}. 
A \textbf{minimal cut} of $G$ is a partition $V=V_1\sqcup V_2$ such that deleting all edges in $G$ between $V_1$ and $V_2$ increases by one the number of connected components of $G$.
Let $\widetilde G$ be a directed graph obtained by orienting each edge of $G$.
We now describe how to obtain a matroid $\cM^*(\widetilde{G})$, which is dual to the matroid  $\cM(\widetilde{G})$ of \Cref{sec_graphical}, from the minimal cuts of $G$. 
The circuits of the oriented matroid $\cM^*(\widetilde{G})$ are obtained from the minimal cuts of $G$ as follows.
Given a partition $V=V_1\sqcup V_2$, we obtain the signed sets $\wX=(X^+,X^-)$ and $-\wX$ by setting 
    \begin{align*}
    X^+&:=\{e\in E\mid e \text{ goes from } V_1 \text{ to } V_2\}
    \\
    X^-&:=\{e\in E\mid e \text{ goes from } V_2 \text{ to } V_1\}
    .\end{align*}
The circuits of $\cM^*(\widetilde{G})$ are obtained by constructing the circuits above for all minimal cuts of $G$.

\begin{figure}[h!]
	\centering
	\includegraphics[width=0.2\textwidth]{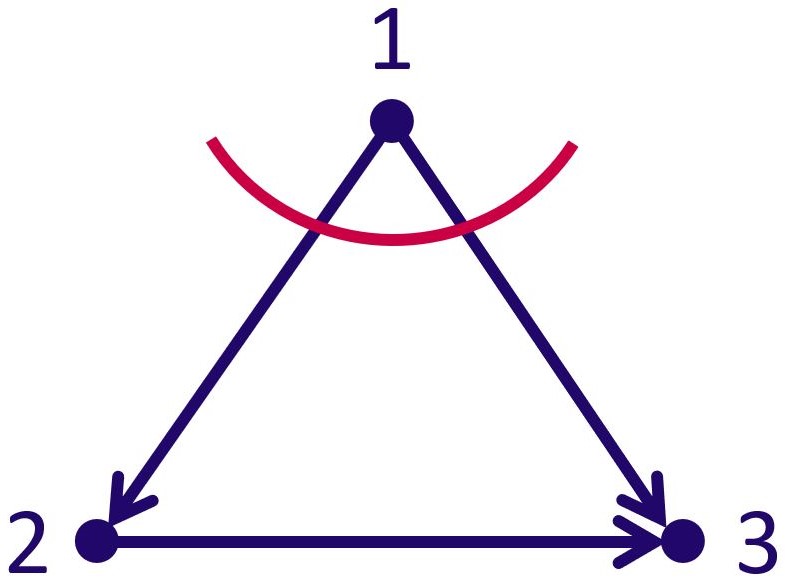}
	\caption{A cut of $\widetilde{K_3}$ with $V_1 = \{1\}$ and $V_2 = \{2,3\}$. This depicts the cocircuits $(\{\vec{12},\vec{13}\},\emptyset)$ and $(\emptyset,\{\vec{12},\vec{13}\})$.}
	\label{a2graph1cut}
\end{figure}
To continue the example of $\widetilde{K_3}$, the cocircuits of $\cM(\widetilde{K_3})$ are $(\{\overrightarrow{12},\overrightarrow{13}\},\emptyset)$, $(\emptyset,\{\overrightarrow{12},\overrightarrow{13}\})$, $(\{\overrightarrow{12}\},\{\overrightarrow{23}\})$, $(\{\overrightarrow{23}\},\{\overrightarrow{12}\})$, $(\{\overrightarrow{13},\overrightarrow{23}\},\emptyset)$, and $(\emptyset,\{\overrightarrow{13},\overrightarrow{23}\})$, which can also be written as $(+,+,0)$, $(-,-,0)$, $(+,0,-)$, $(-,0,+)$, $(0,+,+)$, and $(0,-,-)$. The OMC polytope $P_{\cM^*(\widetilde{K_3})}$ is a hexagon in $\RR^3$, pictured in \Cref{2withcoords}.

\begin{proposition}\label{thm_dualgraph_dim} 
	Let $G = (V,E)$ be a graph and $\widetilde{G}$ be a directed graph obtained by orienting the edges of $G$. 
	The dimension of $P_{\cM^*(\widetilde{G})}$ is $|V|-k$, where $k$ is the number of connected components of $G$.
\end{proposition}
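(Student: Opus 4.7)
The approach is to mirror the proof of \Cref{thm_graph_dim}, exploiting the duality between the graphical matroid and its cogrphical counterpart. By \Cref{rem_dimension_general}, since $P_{\cM^*(\widetilde{G})}$ is centrally symmetric, the origin lies in its interior, so its dimension equals the dimension of the vector space $W$ spanned by its vertices. By \Cref{vertices}, the vertices of $P_{\cM^*(\widetilde{G})}$ are precisely the signed incidence vectors $\textbf{v}_{\wC}$ ranging over the circuits of $\cM^*(\widetilde{G})$, that is, over the minimal cuts of $G$ with the orientation data supplied by $\widetilde{G}$.

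The plan is then to apply \cite[Exercise 4.45]{orientedmatroids} to the oriented matroid $\cM^*(\widetilde{G})$: for any oriented matroid with ground set $E$ and rank $r^*$, the span of its circuit vectors has dimension $|E|-r^*$. So I need to compute $r^*=\operatorname{rank}(\cM^*(\widetilde{G}))$. From the proof of \Cref{thm_graph_dim}, $\operatorname{rank}(\cM(\widetilde{G}))=|V|-k$, and by matroid duality $r^*=|E|-(|V|-k)=|E|-|V|+k$. Plugging in, $\dim W=|E|-r^*=|V|-k$, giving the claim.

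As a sanity check (and an alternative route if the invocation of Exercise 4.45 feels unsatisfying), one can argue directly in the representable setting: $\cM(\widetilde{G})$ is represented by the signed incidence matrix $B\in\ZZ^{V\times E}$ of $\widetilde{G}$, whose row space is the cut space of $\widetilde{G}$. The cocircuit vectors $\textbf{v}_{\wC}$ are precisely the inclusion-minimal nonzero signed supports of elements in $\operatorname{rowsp}(B)$, and a standard argument (writing any row as a signed sum of minimal-cut indicators via the block decomposition into $2$-edge-connected components) shows that these cocircuit vectors span the entire row space. Since $\operatorname{rank}(B)=|V|-k$, the dimension of the span equals $|V|-k$.

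The main obstacle, if any, is simply being confident in the equality $\dim W=|E|-r^*$ for cocircuits rather than circuits; but this is symmetric since cocircuits of $\cM$ are circuits of $\cM^*$, so the same cited exercise applies verbatim. No further graph-theoretic input is needed beyond the rank formula $r=|V|-k$ that was used in \Cref{thm_graph_dim}.
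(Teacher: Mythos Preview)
Your argument is correct but takes a different route from the paper's. You apply \cite[Exercise~4.45]{orientedmatroids} directly to the dual matroid $\cM^*(\widetilde{G})$, compute its rank as $r^*=|E|-(|V|-k)$ via matroid duality, and read off $\dim=|E|-r^*=|V|-k$. The paper instead argues hands-on in the representable setting: it shows that the span of the cocircuit vectors coincides with $\mathrm{rowsp}(A)$, where $A$ is the incidence matrix of $\widetilde{G}$, by observing that each row of $A$ is itself a single-vertex-cut cocircuit vector and, conversely, proving the explicit identity $\bv_{\wX}=\sum_{i\in V_1}\bv_{\wC_i}$ for every minimal cut $V_1\sqcup V_2$; then $\dim=\operatorname{rank}(A)=|V|-k$. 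Two remarks on the comparison. First, your phrase ``for any oriented matroid'' overstates what is available: as noted in \Cref{rem_dimension_general}, only $\dim\ge|E|-r$ is known in general, and the exact value is open. Equality does hold here because $\cM^*(\widetilde{G})$ is regular (cographic), the same reason the paper could invoke equality for the graphic case in \Cref{thm_graph_dim}; you should make that hypothesis explicit rather than claim it universally. Second, the paper's direct proof is not gratuitous: the identity it establishes (labeled \eqref{eq_bond_expansion}) is reused later to show that $\Poly_{n-1}$ is a zonotope in \Cref{itsazonotope}, so the longer argument pays dividends downstream that your slicker route does not supply.
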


\begin{proof}
The incidence matrix $A$ of $\widetilde{G}$ represents $\cM(\widetilde{G})$ over any ordered field. 
It follows that the circuits $\cC^*$ of $\cM^*(\widetilde{G})$ are the inclusion-minimal non-empty signed supports of the vectors in $\mathrm{rowsp}(A)$.
We wish to compute the dimension of the vector space spanned by $\{\textbf{v}_{\wX} \mid \wX\in\cC^*\}$.

Let us show that $\mathrm{rowsp}(A)=\mathrm{span}\{\textbf{v}_{\wX} \mid \wX\in\cC^*\}$.
Note that the row of $A$ corresponding to the node $i\in V$ is precisely $\bv_{\wY_i}$, where $\wY_i$ denotes the signed set corresponding to the partition $\{i\}\sqcup (V\setminus\{i\})$.
Let $V=V_1\sqcup V_2$ be a minimal cut of $G$ and $\wX$ the corresponding circuit.
We claim that 
    \begin{equation}\label{eq_bond_expansion}
        \bv_{\wX}=\sum_{i\in V_1} \bv_{\wY_i}.    
    \end{equation}
Suppose that $e=(j,m)\notin X$ so that $(\bv_{\wX})_e=0$ and note that either $j,m\in V_1$ or  $j,m\in V_2$.
In the first case, we have 
    $$
    \sum_{i\in V_1} (\bv_{\wY_i})_e
    =(\bv_{\wY_j})_e+(\bv_{\wY_m})_e=1-1=0.
    $$
The second case follows from $(\bv_{\wY_i})_e=0$ for all $i\in V_1$.
Next, suppose that $e=(j,m)\in X^+$ and note that  
    $$
    (\bv_{\wX})_e=1=(\bv_{\wY_j})_e=\sum_{i\in V_1} (\bv_{\wY_i})_e.
    $$
The case $e\in X^-$ follows from similar reasoning.
It follows that $\mathrm{rowsp}(A)\supseteq\mathrm{span}\{\textbf{v}_{\wX} \mid \wX\in\cC^*\}$.

We now prove the opposite inclusion. 
Fix the node $i\in V$ and let $D_1,\ldots,D_a$ be the connected components of $G-i$ that lie inside the connected component of $i$. For each $j=1,\ldots,a$ let $\wX_j$ be the circuit corresponding to the minimal cut $(V\setminus D_j)\sqcup D_j$. We claim that
\[
\bv_{\wY_i} = \sum_{j=1}^a \bv_{\wX_j}.
\]
Since every edge incident to $i$ is incident to a vertex in exactly one $D_j$, it follows that
\[
(\bv_{\wY_i})_e = \sum_{j=1}^a (\bv_{\wX_j})_e
\]
 for $e=(i,m)$ or $e=(m,i)$.
Since all the $D_j$ are connected, if $e=(m,\ell)$ with $m,\ell\neq i$ then $(\bv_{\wX_j})_e=0$ for all $j$. 
The claim follows.
We conclude that  $\mathrm{rowsp}(A)= \mathrm{span}\{\textbf{v}_{\wX} \mid \wX\in\cC^*\}$ and thus $\dim(P_{\cM^*(\widetilde{G})})=\mathrm{rank}(A)= |V|-k$.
\end{proof}

Together, Proposition \ref{thm_graph_dim} and Proposition \ref{thm_dualgraph_dim} give us that
\begin{align*}
	\dim(P_{\mathcal{M}(\widetilde{G})}) + \dim(P_{\mathcal{M}^*(\widetilde{G})}) = |E|,
\end{align*}
that is, given a graphical matroid, the sum of the dimensions of the OMC polytope of the graphical matroid and the OMC polytope of its dual is equal to the ambient dimension. However, as we can see in the following example this does not hold for any oriented matroid. 

\begin{example}\label{eg:typeB}
Consider the matrix
\begin{align*}
	M=\begin{pmatrix}
		1 & 0 & 0 & 1 &  1 & 1 &  1 & 0 &  0 \\
		0 & 1 & 0 & 1 & -1 & 0 &  0 & 1 &  1 \\
		0 & 0 & 1 & 0 &  0 & 1 & -1 & 1 & -1 
	\end{pmatrix}
\end{align*}
whose columns are the type $B_3$ positive roots.
The oriented matroid
$\mathcal{M}(\mathcal{B}_3^+)$ corresponding to the matrix $M$ is not a graphical matroid, and $\dim(P_{\mathcal{M}(\mathcal{B}_3^+)}) = 9$, which is the dimension of the ambient space. Consider, then, the dual oriented matroid. A direct computation shows that $P_{\mathcal{M}^*(\mathcal{B}_3^+)}$ is not a $0$-dimensional polytope; in fact, using a computer, we determined that $\dim(P_{\mathcal{M}^*(\mathcal{B}_3^+)}) = 9$ as well.
\end{example}


\subsection{Bridges and loops}

In \Cref{sec_dual_a}, we carefully study the oriented matroid cocircuit polytope whose underlying matroid is the complete graph on $n$ nodes, $K_n$. We do not study other OMC polytopes in such detail, but we can make some slightly more general comments. 

\begin{proposition}\label{prop:suspension}
	Let $\widetilde{G}=(V,E)$ be a directed graph with at least one bridge $b\in E$. Then $P_{\cM^*(\widetilde{G})}$ is a suspension of $P_{\cM^*(\widetilde{G}\setminus\{b\})}$.
\end{proposition}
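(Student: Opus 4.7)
The plan is to identify the two apex points of the claimed bipyramid as $\pm \be_b$, and show that every other vertex of $P_{\cM^*(\widetilde{G})}$ lies in the hyperplane $x_b = 0$ and realizes a vertex of $P_{\cM^*(\widetilde{G}\setminus\{b\})}$.

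First, since $b$ is a bridge, deleting $b$ from $G$ increases the number of connected components by one. This means the singleton $\{b\}$ is itself a minimal edge cut, corresponding to the bond with $V_1, V_2$ being the two components into which $b$'s component splits. Hence $(\{b\},\emptyset)$ and $(\emptyset,\{b\})$ are circuits of $\cM^*(\widetilde{G})$, contributing the vertices $\pm \be_b$ of $P_{\cM^*(\widetilde{G})}$ by \Cref{vertices}.

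Next, I would argue that no other circuit of $\cM^*(\widetilde{G})$ involves $b$. If $\wX$ is a circuit of $\cM^*(\widetilde{G})$ with $b \in X = X^+ \cup X^-$, then $X$ is a bond of $G$ containing the bond $\{b\}$. By minimality of bonds (which is exactly the underlying matroidal form of axiom (C2)), we must have $X = \{b\}$, so $\wX$ is one of the two bridge circuits above. Equivalently, this is the observation that $b$ is a coloop of $\cM(\widetilde{G})$ and hence a loop of $\cM^*(\widetilde{G})$.

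It follows that the remaining vertices of $P_{\cM^*(\widetilde{G})}$ all have $b$-coordinate equal to $0$, so they lie in the hyperplane $H = \{x : x_b = 0\}$. Moreover, the circuits of $\cM^*(\widetilde{G})$ not involving $b$ are precisely the circuits of $\cM^*(\widetilde{G}\setminus\{b\})$: on the graph side, a bond of $G$ avoiding $b$ remains a bond of $G\setminus\{b\}$ (since the two induced partitions give the same increment in component count whether or not $b$ is in the graph), and conversely any bond of $G\setminus\{b\}$ lifts to a bond of $G$ avoiding $b$. Thus
\begin{equation*}
P_{\cM^*(\widetilde{G})} = \conv\bigl( P_{\cM^*(\widetilde{G}\setminus\{b\})} \cup \{+\be_b, -\be_b\}\bigr),
\end{equation*}
with $P_{\cM^*(\widetilde{G}\setminus\{b\})}$ sitting inside $H$ and the two apex points on opposite sides of $H$. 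By definition, this realizes $P_{\cM^*(\widetilde{G})}$ as the suspension of $P_{\cM^*(\widetilde{G}\setminus\{b\})}$. The only mildly subtle point is the bond-deletion step, which is conceptually straightforward but should be stated carefully; everything else is an immediate consequence of (C2) applied to the singleton bond $\{b\}$.
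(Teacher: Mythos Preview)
Your proof is correct and follows essentially the same approach as the paper's: identify $\{b\}$ as a singleton bond giving the apex vertices $\pm\be_b$, invoke axiom (C2) to conclude no other cocircuit contains $b$, and deduce the suspension structure. If anything, your version is slightly more careful in spelling out the correspondence between cocircuits of $\cM^*(\widetilde{G})$ avoiding $b$ and cocircuits of $\cM^*(\widetilde{G}\setminus\{b\})$, which the paper simply asserts.
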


\begin{proof}
	Since a bridge is an edge whose removal increases the number of connected components by one, $b$ precisely defines a minimal cut of $\widetilde{G}$. Thus $b$ yields cocircuits of the form $\widetilde{X}=(0,\dots,0,+,0,\dots,0)$ and $-\widetilde{X}=(0,\dots,0,-,0,\dots,0)$. By circuit axiom (C2), there can be no other circuits of $\cM^*(\widetilde{G})$ containing $\widetilde{X}$, so every other circuit of $\cM^*(\widetilde{G})$ comes from a circuit of $\cM^*(\widetilde{G}\setminus\{b\})$, with a $0$ in the spot corresponding to edge $b$. In particular, $P_{\cM^*(\widetilde{G})}$ is a suspension of $P_{\cM^*(\widetilde{G}\setminus\{b\})}$.
\end{proof}

Consider another special kind of edge: a directed loop. For example, let $\widetilde{G}$ be the directed graph that is a set of three directed loops on one node. Every loop contributes two circuits: one from traversing the loop in a positive direction, and the other from traversing the loop in a negative direction. The circuits, then, are $(1,\emptyset)$, $(\emptyset, 1)$, $(2,\emptyset)$, $(\emptyset, 2)$, $(3,\emptyset)$, $(\emptyset, 3)$. Then $P_{\cM(\widetilde{G})}$ has vertices $(1,0,0)$, $(-1,0,0)$, $(0,1,0)$, $(0,-1,0)$, $(0,0,1)$, $(0,0,-1)$: it is the three-dimensional cross-polytope. Extending this idea gives us the following proposition.

\begin{proposition}\label{prop:bouquet}
	Let $\widetilde{B}=(V,E)$ be a bouquet, that is, a graph with $|E|$ directed loops. Then $P_{\cM(\widetilde{B})}$ is the $|E|$-dimensional cross-polytope.
\end{proposition}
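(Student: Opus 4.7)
The plan is to enumerate the circuits of $\cM(\widetilde{B})$ directly and observe that their signed incidence vectors are exactly $\pm\be_i$ for $i\in[|E|]$, the standard vertex set of the $|E|$-dimensional cross polytope.

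First I would identify the cycles of $B$. Since $B$ has a single vertex, any simple cycle must be supported on edges incident to that vertex, i.e.\ on the loops themselves. Each individual loop forms a simple cycle on its own, and no simple cycle can combine two or more distinct loops, since such a walk would have to revisit the unique vertex before closing up. Hence the cycles of $B$ are exactly its $|E|$ loops.

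Next, using the description of circuits of $\cM(\widetilde{G})$ from Section~\ref{sec_graphical}, each loop $i$ contributes precisely the two circuits $(\{i\},\emptyset)$ and $(\emptyset,\{i\})$, corresponding to the two directions of traversal relative to the chosen orientation. Their signed incidence vectors are $\be_i$ and $-\be_i$, respectively. By \Cref{vertices}, these are the vertices of $P_{\cM(\widetilde{B})}$, so
$$
P_{\cM(\widetilde{B})} = \conv\{\pm\be_i : i\in[|E|]\},
$$
which is by definition the $|E|$-dimensional cross polytope.

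I do not expect any significant obstacle. The only delicate point is ruling out circuits involving several loops; this follows immediately from circuit axiom (C2) together with the fact that each $(\{i\},\emptyset)$ is already a circuit, so no circuit can strictly contain the singleton $\{i\}$. Everything else is a direct computation.
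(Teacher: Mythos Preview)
Your proposal is correct and follows exactly the idea the paper uses: the paper does not give a formal proof of this proposition, but instead works out the case $|E|=3$ in the paragraph immediately preceding it and then states that ``extending this idea'' yields the general statement. Your argument makes that extension explicit, including the use of axiom (C2) (or equivalently \Cref{vertices}) to rule out circuits supported on more than one loop, so nothing is missing.
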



\section{A closer look: The complete graph}\label{sec_dual_a}

Given the complete graph on $n$ nodes $K_n$, orient the edges $(i,j)$ with $i<j$.
Observe that the incidence matrix of this directed graph is the matrix whose columns are the positive roots of $A_{n-1}$, that is, $\be_i-\be_j$ with $1\le i<j\le n$.
Since the oriented matroid for this directed graph is represented by this matrix, we denote this matroid by $\cA_{n-1}$ and its dual by $\cA_{n-1}^*$.
In this section we focus on the polytope $P_{\cA_{n-1}^*}\subseteq \RR^{\binom{n}{2}}$, which we denote by $\Poly_{n-1}$ to simplify notation.
The polytope $\Poly_{n-1}$ appears in several other contexts, which we use to determine the structure, Ehrhart theory, and even equivariant Ehrhart theory of $\Poly_{n-1}$ with respect to the previously mentioned $S_n$ action. 
We also describe this polytope and the fixed polytopes under this action.

\subsection{The polytope $\Poly_{n-1}$}\label{sec_poly_details}

Let $\widetilde{K_n}$ be the complete graph on $n$ nodes, oriented so that each edge is directed from the smaller vertex to the larger vertex. Note that by \Cref{rem:reverseorientation}, reversing the orientations of some of the edges would yield an isomorphic polytope. By \Cref{vertices}, we have that each vertex of $\Poly_{n-1}$ corresponds with a minimal cut $[n]=V_1\sqcup V_2$ of $K_n$.
In fact, any partition $[n]=V_1\sqcup V_2$ with $V_1,V_2\neq\emptyset$ is a minimal cut of $K_n$.
Throughout this section we denote by $\widehat{\textbf{u}}_{a_1\dots a_m}$ the vertex of $\Poly_{n-1}$ corresponding to the minimal cut with $V_1=\{a_1,\dots,a_m\}$. In particular, the vertices $\widehat{\bu}_{I}$ of $\Poly_{n-1}$ are in bijection with subsets $\emptyset\subsetneq I\subsetneq [n]$.

The following corollary is an immediate consequence of Proposition \ref{thm_dualgraph_dim}.

\begin{corollary}\label{dAdim}
	The dimension of $\Poly_{n-1}$ is $n-1$.
\end{corollary}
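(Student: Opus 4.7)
The plan is to simply invoke \Cref{thm_dualgraph_dim} with $G=K_n$. Since by definition $\Poly_{n-1}=P_{\cA_{n-1}^*}=P_{\cM^*(\widetilde{K_n})}$, where $\widetilde{K_n}$ is the complete graph on $n$ nodes with edges oriented from smaller to larger vertex, we are in the exact setting of that proposition. The graph $K_n$ has $|V|=n$ vertices and is connected, so $k=1$ connected component. Therefore \Cref{thm_dualgraph_dim} immediately yields
\[
\dim(\Poly_{n-1}) \;=\; \dim\bigl(P_{\cM^*(\widetilde{K_n})}\bigr) \;=\; |V|-k \;=\; n-1.
\]

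There is no real obstacle here: the corollary is genuinely a direct specialization. If anything, the only thing worth noting explicitly is that $K_n$ is connected (which is obvious for $n\ge 1$), so that $k=1$ rather than something larger. One could equivalently observe that the rank of the incidence matrix of $\widetilde{K_n}$ is $n-1$ (the standard fact used in the proof of \Cref{thm_dualgraph_dim}), but since that proposition has already been proved, no new argument is needed.

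In short, the proof is a one-line application of \Cref{thm_dualgraph_dim}, and the write-up should just record the specialization $|V|=n$, $k=1$ and conclude $\dim(\Poly_{n-1})=n-1$.
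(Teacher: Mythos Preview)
Your proposal is correct and matches the paper's approach exactly: the paper states that the corollary is an immediate consequence of \Cref{thm_dualgraph_dim}, and your specialization $|V|=n$, $k=1$ is precisely the intended one-line argument.
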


\begin{figure}
	\centering
	\includegraphics[width=0.3\textwidth]{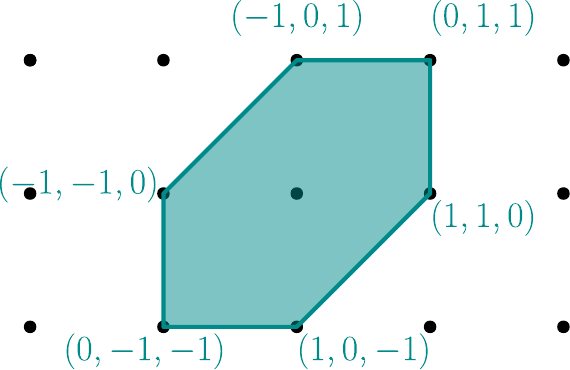}
	\caption{$\Poly_{2}$ with aff$(\Poly_{2})$ pictured as generated by $(1,1,0)$ and $(-1,0,1)$.}
	\label{2withcoords}
\end{figure}

We write the coordinates of $\RR^{\binom{n}{2}}$ as $x_{ij}$ for $1\leq i<j\leq n$, where $x_{ij}$ corresponds to the edge $(i,j)$ of $K_n$.
Note then that given $V_1=\{a_1,\dots,a_m\}$,
\begin{align*} (\widehat{\textbf{u}}_{a_1\dots a_m})_{ij} := 
	\begin{cases}
		1, & i\in V_1,\ j\in V_2 \\
		-1, & i\in V_2,\ j\in V_1 \\
    0, & \text{otherwise}
	\end{cases}
.\end{align*}
It is straightforward to check that given $1\leq i<j< n$, every vertex of $\Poly_{n-1}$ satisfies   $x_{ij}-x_{in}+x_{jn} = 0$.
Thus, $\Poly_{n-1}$ is contained in the linear subspace of $\RR^{\binom{n}{2}}$ given by the intersection of the $\binom{n-1}{2}$ hyperplanes of the form $x_{ij}-x_{in}+x_{jn} = 0$. These hyperplanes correspond to the cycles of $K_n$ of the form $ijn$. The reader can confirm that this intersection is $(n-1)$-dimensional. Thus, it follows from \Cref{dAdim} that this linear subspace is the affine span $\mathrm{aff}(\Poly_{n-1})$ of $\Poly_{n-1}$.

In order to better understand the structure of $\Poly_{n-1}$, we discuss zonotopes. For a finite set of vectors $\bV=\{\bv_1,\ldots,\bv_n\} \subseteq \RR^d$ we define the \textbf{zonotope} of $\bV$ to be
\[
\Zono(\bV) \ := \  \sum_{i=1}^n [\bzero, \bv_i] \ := \ \{\lambda_1\bv_1+\dots+\lambda_n\bv_n \mid \text{for all $i$, } 0\leq\lambda_i\leq 1\} \, ,
\]
where $[\bzero, \bv_i]:=\{\lambda\bv_i \mid 0\le \lambda\le 1\}$.

\begin{proposition}\label{itsazonotope}
	$\Poly_{n-1}$ is the zonotope $\Zono(\widehat{\bu}_1,\widehat{\bu}_2,\dots,\widehat{\bu}_n)$.
\end{proposition}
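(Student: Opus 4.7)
The plan is to deduce the identity $\Poly_{n-1} = \Zono(\widehat{\bu}_1,\ldots,\widehat{\bu}_n)$ from the standard fact that, for finite sets $A_1,\ldots,A_n$ in a vector space,
\[
\conv(A_1) + \cdots + \conv(A_n) = \conv\Bigl\{\sum_{i=1}^n a_i : a_i\in A_i\Bigr\}.
\]
Applied to $A_i=\{\bzero,\widehat{\bu}_i\}$, this gives
\[
\Zono(\widehat{\bu}_1,\ldots,\widehat{\bu}_n) = \conv\Bigl\{\sum_{i\in S}\widehat{\bu}_i : S\subseteq [n]\Bigr\}.
\]
So the proof reduces to identifying the $2^n$ subset sums with the vertex set of $\Poly_{n-1}$ (together with the origin).

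The central step I would carry out is a coordinate computation establishing
\[
\widehat{\bu}_I = \sum_{i\in I}\widehat{\bu}_i \qquad \text{for every } \emptyset \neq I \subsetneq [n].
\]
Fix an edge $(i,j)$ with $i<j$ and split into the four cases according to which of $i,j$ lie in $I$. Using the explicit formula for $(\widehat{\bu}_r)_{ij}$ (which is $+1$ exactly when $r=i$, $-1$ exactly when $r=j$, and $0$ otherwise), a direct tally in each case matches the prescribed value of $(\widehat{\bu}_I)_{ij}$. For example, if $i,j\in I$ then only $\widehat{\bu}_i$ and $\widehat{\bu}_j$ contribute, giving $+1-1=0$, matching the fact that $(i,j)$ is internal to $V_1$. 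The same calculation applied with $I=[n]$ yields $\sum_{i=1}^n\widehat{\bu}_i = \bzero$.

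Having established this, the subset sums appearing in the zonotope are exactly the set $\{\widehat{\bu}_I : \emptyset\neq I\subsetneq [n]\}\cup\{\bzero\}$: the nonempty proper $I$ produce all the vertices of $\Poly_{n-1}$ listed in Proposition~\ref{vertices}, while $S=\emptyset$ and $S=[n]$ both give the origin. Since $\Poly_{n-1}$ is centrally symmetric it contains the origin, hence the right-hand side of the displayed equation is exactly $\Poly_{n-1}$, completing the proof.

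I expect no serious obstacle; the main care is the bookkeeping in the coordinate-wise verification of $\widehat{\bu}_I = \sum_{i\in I}\widehat{\bu}_i$, but the four-case analysis is routine. One small subtlety worth flagging is that the zonotope formula produces the convex hull of all $2^n$ subset sums, so it is important to observe the (a priori non-obvious) collapse $\sum_{i\in[n]}\widehat{\bu}_i=\bzero$ to see that no extraneous points appear outside $\Poly_{n-1}$.
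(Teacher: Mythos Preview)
Your proposal is correct and follows essentially the same route as the paper: both arguments hinge on the identity $\widehat{\bu}_I=\sum_{i\in I}\widehat{\bu}_i$ (the paper quotes it from the earlier equation~\eqref{eq_bond_expansion}, you redo the coordinate check) together with $\sum_{i\in[n]}\widehat{\bu}_i=\bzero$, and then identify the $2^n$ subset sums with the vertices of $\Poly_{n-1}$ plus the origin. The only cosmetic difference is that the paper phrases the reverse inclusion as ``it suffices to check the $0/1$ corners lie in $\Poly_{n-1}$'' while you invoke the Minkowski-sum/convex-hull identity up front; these are the same observation.
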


\begin{proof}
First, note that \eqref{eq_bond_expansion} implies that $\widehat{\bu}_{a_1\dots a_m} = \sum\limits_{i=1}^{m} \widehat{\bu}_{a_i}$ for each $\{a_1,\dots,a_m\}\neq \varnothing,[n]$. 
Thus, each vertex of $\Poly_{n-1}$ is in $\Zono(\widehat{\bu}_1,\widehat{\bu}_2,\dots,\widehat{\bu}_n)$, i.e.\ $\Poly_{n-1} \subseteq \Zono(\widehat{\bu}_1,\widehat{\bu}_2,\dots,\widehat{\bu}_n)$.
To prove the opposite containment it suffices to verify that each sum $\lambda_1\widehat{\bu}_1+\dots+\lambda_n\widehat{\bu}_n$ with $\lambda_i\in\{0,1\}$ is in $\Poly_{n-1}$.
The case in which $\lambda_i=0$ and $\lambda_j=1$ for some $i,j$ follows from the equation above.
Since $\bzero\in\Poly_{n-1}$, the case in which all $\lambda_i=0$ also follows.
For the case in which all $\lambda_i=1$, note that for any edge $e$
    $$
    (\widehat{\bu}_1)_e+\dots+(\widehat{\bu}_n)_e=1-1=0
    $$ 
and therefore $\widehat{\bu}_1+\dots+\widehat{\bu}_n=\bzero\in\Poly_{n-1}$.
\end{proof}

\begin{remark}
One could ask if all graphical matroids yield OMC polytopes that are zonotopes, but it is not the case. Consider, for instance, the graph $G$ which is $\widetilde{K_4}$ with the edge $\overrightarrow{14}$ removed. One can observe that the corresponding cocircuit polytope $P_{\cM^*(G)}$ is a 3-dimensional polytope with 14 faces, 8 of which are triangles. Since the faces of a zonotope are also zonotopes but a triangle is not a zonotope, $P_{\cM^*(G)}$ is not a zonotope. 
\end{remark}

It is often easier to work with full-dimensional objects; since $\Poly_{n-1}$ is an $(n-1)$-dimensional polytope, we want to view $\Poly_{n-1}$ as a polytope in $\RR^{n-1}$. We use the projection in which we keep the $x_{in}$ coordinates for $i\in[n-1]$. Since each coordinate is of the form $x_{in}$ for a fixed $n$, we now write the coordinates as $x_i$ for $i\in[n-1]$. In particular, the projection $\pi:\RR^{\binom{n}{2}}\to\RR^{n-1}$ comes from the $(n-1)\times\binom{n}{2}$ matrix
\begin{align*}
	(\pi_{ij}) =
	\begin{cases}
		1 &\text{ if } j = in-\binom{i+1}{2} \\
		0 &\text{ otherwise}
	\end{cases}
\end{align*}
and $\pi(\Poly_{n-1})=\Zono(\be_1,\ldots,\be_{n-1},-\mathbb{1})$, where $\{\be_1,\ldots,\be_{n-1}\}$ is the standard basis for $\RR^{n-1}$ and $\mathbb{1}:=\be_1+\cdots+\be_{n-1}$. For $\emptyset\subsetneq I\subsetneq[n]$, we define $\bu_I := \pi(\widehat{\bu}_I)$.

Not only is $\pi(\Poly_{n-1})$ linearly isomorphic to $\Poly_{n-1}$, but the two polytopes have the same lattice point count. 
To verify this, first observe that if $p\in\ZZ^{\binom{n}{2}}$, it immediately follows that $\pi(p)\in\pi\left(\ZZ^{\binom{n}{2}}\right)$. On the other hand, consider $p\in\pi\left(\ZZ^{\binom{n}{2}}\right)$, so $p = (a_1,a_2,\dots,a_{n-1})$ where $a_i\in\ZZ$. To construct $\pi^{-1}(p) = (a_{12}, a_{13}, \dots, a_{n-1 n})$, we must have $a_{in} = a_i$. We can then use the equations of the hyperplanes $x_{ij} - x_{in} + x_{jn} = 0$ to ensure that we end up with a point in $\ZZ^{\binom{n}{2}}\cap\mathrm{aff}(\Poly_{n-1})$. This tells us that $a_{ij} = a_{in} - a_{jn} = a_i - a_j$. Then $a_i,a_j\in\ZZ$ implies that $a_{ij}\in\ZZ$ as well. Thus $\pi^{-1}(p)\in\ZZ^{\binom{n}{2}}\cap\mathrm{aff}(\Poly_{n-1})$.

\begin{figure}
	\centering
	\includegraphics[width=0.25\textwidth]{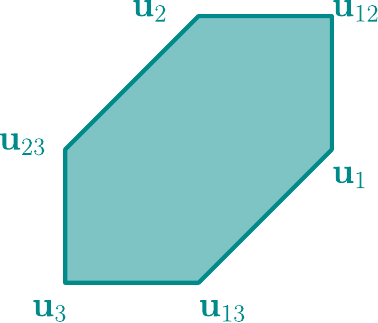}
	\hspace{24pt}
	\includegraphics[width=0.32\textwidth]{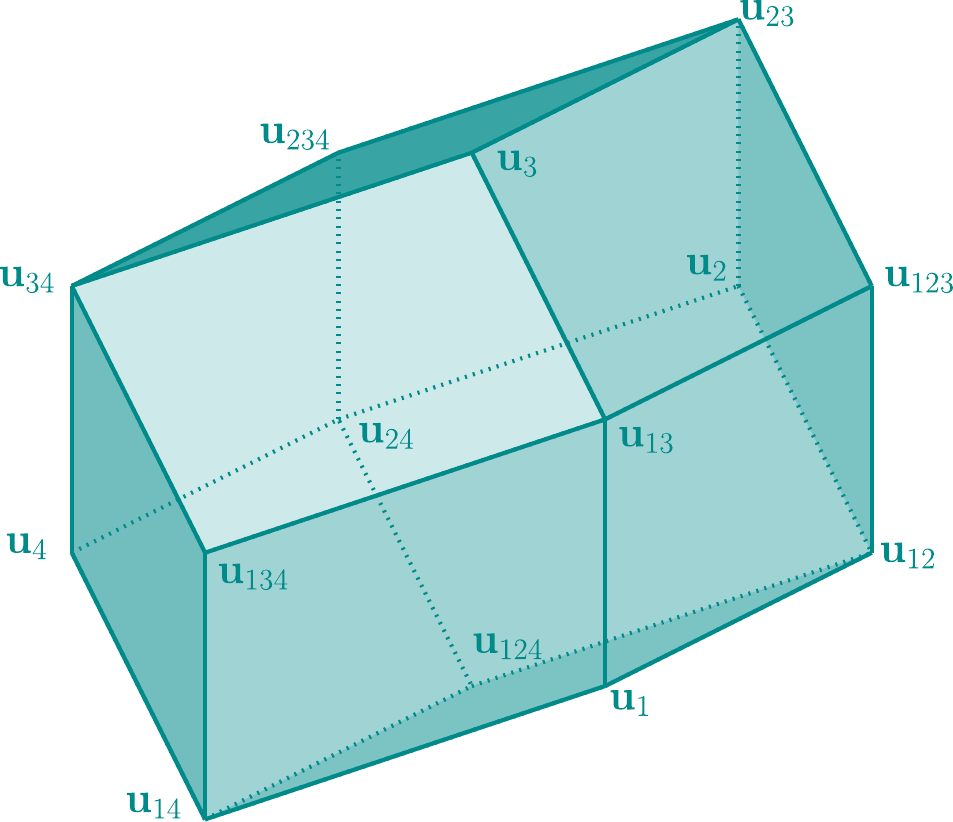}
	\caption{$\Poly_{2}$ (left) and $\Poly_{3}$ (right).}
	\label{2and3labeled}
\end{figure}

\subsection{Other appearances} \label{sec_other_appearances}

In addition to being an OMC polytope, $\Poly_{n-1}$ is found in other, well-studied families of polytopes, in particular, graphic zonotopes and the polar duals of symmetric edge polytopes. Moreover, $\Poly_{n-1}$ appears in Stapledon's paper \cite{stapledonequivariant} in Example 8.7 and Remark 9.4; we shall return to this last appearance in more detail in \Cref{sec_eet}.

First, we verify that $\Poly_{n-1}$ is affinely isomorphic to the graphic zonotope of the cycle with $n$ vertices, denoted $C_n$. 
The \textbf{graphic zonotope} $\Zono_G$ associated to a graph $G=([n],E)$ is 
\begin{equation*}
	\Zono_G:=\sum_{\{u,v\}\in E}[\be_u,\be_v ].
\end{equation*}

\begin{proposition}\label{prop: zono is graph zono}
Consider the affine map
$\phi:\RR^{n-1}\to\RR^{n}$ given by
\begin{equation*}
	\phi(\bx):=(-x_1+1,x_1-x_2+1,\ldots,x_{n-2}-x_{n-1}+1,x_{n-1}+1)
.\end{equation*}
 We have that $\phi(\Zono(\be_1,\ldots,\be_{n-1},-\mathbb{1}))=\Zono_{C_n}$ and thus $\Poly_{n-1}$ is affinely isomorphic to $\Zono_{C_n}$.
\end{proposition}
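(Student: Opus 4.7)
The plan is to check directly that the affine map $\phi$ sends the Minkowski summands generating $\Zono(\be_1,\ldots,\be_{n-1},-\mathbb{1})$ onto the Minkowski summands generating $\Zono_{C_n}$, up to a single translation.

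First I would separate $\phi$ into its linear part $L$ and translation part $\bc$. Setting $\bx=\bzero$ gives $\bc=\phi(\bzero)=\mathbb{1}_n:=\be_1+\be_2+\cdots+\be_n$, so $\phi(\bx)=L(\bx)+\mathbb{1}_n$. Next I would evaluate $L$ on the generators of the zonotope: a direct computation shows
\begin{equation*}
L(\be_i)=\be_{i+1}-\be_i \text{ for } i=1,\ldots,n-1, \qquad L(-\mathbb{1})=\be_1-\be_n.
\end{equation*}
In particular, $L$ is injective (if $L(\bx)=\bzero$, the last coordinate forces $x_{n-1}=0$, the first coordinate forces $x_1=0$, and then the intermediate coordinates force $x_i=x_{i-1}$ inductively), so $\phi$ is injective on $\RR^{n-1}$ and hence affinely isomorphic onto its image.

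Since zonotopes are Minkowski sums and $\phi$ is affine, $\phi$ applied to $\Zono(\be_1,\ldots,\be_{n-1},-\mathbb{1})$ equals
\begin{equation*}
\mathbb{1}_n+\sum_{i=1}^{n-1}[\bzero,\be_{i+1}-\be_i]+[\bzero,\be_1-\be_n].
\end{equation*}
On the other hand, using the elementary identity $[\be_u,\be_v]=\be_u+[\bzero,\be_v-\be_u]$ on each edge of $C_n$ with the choice $u=i$, $v=i+1$ for edges $\{i,i+1\}$ and $u=n$, $v=1$ for the closing edge $\{n,1\}$ gives
\begin{equation*}
\Zono_{C_n}=\left(\sum_{i=1}^{n-1}\be_i+\be_n\right)+\sum_{i=1}^{n-1}[\bzero,\be_{i+1}-\be_i]+[\bzero,\be_1-\be_n]=\mathbb{1}_n+\sum_{i=1}^{n-1}[\bzero,\be_{i+1}-\be_i]+[\bzero,\be_1-\be_n].
\end{equation*}
The two expressions agree, establishing $\phi(\Zono(\be_1,\ldots,\be_{n-1},-\mathbb{1}))=\Zono_{C_n}$. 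Combined with \Cref{itsazonotope} and the fact that $\pi$ is a linear isomorphism onto its image from $\Poly_{n-1}$, this yields the stated affine isomorphism between $\Poly_{n-1}$ and $\Zono_{C_n}$.

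There is no real obstacle here beyond careful bookkeeping: the only substantive point is choosing the right orientation for each edge of $C_n$ so that the Minkowski summand for $\{u,v\}$ matches exactly one of the generators $L(\be_i)$ or $L(-\mathbb{1})$, and handling the accumulated translation so that it coincides with $\mathbb{1}_n$.
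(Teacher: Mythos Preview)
Your proof is correct and follows essentially the same approach as the paper: both compute the image of $\phi$ and match it against the defining Minkowski sum of $\Zono_{C_n}$. The only cosmetic difference is that the paper carries out the computation pointwise for a generic element $\bv=\sum_i\lambda_i\be_i-\lambda_n\mathbb{1}$ and arrives at the identity $\phi(\bv)=\sum_{i=1}^{n-1}((1-\lambda_i)\be_i+\lambda_i\be_{i+1})+((1-\lambda_n)\be_n+\lambda_n\be_1)$, while you factor $\phi=L+\mathbb{1}_n$, evaluate $L$ on the generators, and compare Minkowski summands directly; these are two packagings of the same calculation.
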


\begin{proof}
Fix $\bv=\lambda_1\be_1+\cdots+\lambda_{n-1}\be_{n-1}-\lambda_n\mathbb{1}=(\lambda_1-\lambda_n,\cdots,\lambda_{n-1}-\lambda_n)\in\Zono(\be_1,\ldots,\be_{n-1},-\mathbb{1})$ so that $\lambda_i\in[0,1]$ for all $i$. Direct computation shows that
\begin{align*}
	\phi(\bv)
	&=
	(1-\lambda_1+\lambda_n,\lambda_1+1-\lambda_2,\cdots,\lambda_{n-2}+1-\lambda_{n-1},\lambda_{n-1}+1-\lambda_n)
	\\&=
	((1-\lambda_1)\be_1+\lambda_1\be_2)+\cdots+((1-\lambda_{n-1})\be_{n-1}+\lambda_{n-1}\be_n)+((1-\lambda_n)\be_n+\lambda_n\be_1) \stepcounter{equation}\tag{\theequation}\label{eq:graphzonophi}
\end{align*}
and the last expression shows that $\phi(\bv)\in\Zono_{C_n}$.
The opposite containment also follows from \Cref{eq:graphzonophi}.
\end{proof}

Given a graph $G$ with node set $V = [n]$, the \textbf{symmetric edge polytope} (SEP) associated to $G$ is the polytope
\begin{align*}
	\mc{S}_G := \conv\{\pm(\be_i-\be_j)\mid \{i,j\}\in E(G)\}.
\end{align*}
Symmetric edge polytopes themselves are quite interesting (see, for example, \cite[Section 6.1]{Ferroni2024})
and sometimes make an appearance in other contexts (see, for instance, \cite[Section 2]{manyfacessep}). When $G$ is the complete graph $K_n$ on $n$ nodes, $\mc{S}_{K_n}$ is the root polytope of $A_{n-1}$. 

The polar duals of symmetric edge polytopes are also of interest \cite{sepgentoregmats}.
    Let $P$ be a $d$-dimensional lattice polytope containing the origin in its interior. The \textbf{polar dual} of $P$ is the polytope
\begin{align*}
	P^{\vee} := \{\bu\in\RR^d \mid \bu\cdot\bx\leq 1 \text{ for every } \bx\in P\}.
\end{align*}
When $P^{\vee}$ is a lattice polytope, we say that $P$ is \textbf{reflexive}.
Symmetric edge polytopes are known to be reflexive.
We find a connection to OMC polytopes: 
the polar dual of the SEP $\mc{S}_{K_n}$ is the tropical unit ball, defined in {\cite[Section 2]{criadotropicalbisectors}} and further investigation shows that, in fact, $\Poly_{n-1}$ is a tropical unit ball.
Concretely, $\Poly_{n-1} = \mc{S}_{K_n}^{\vee}$. 
As a consequence we obtain the following.

\begin{proposition}
The polytope $\Poly_{n-1}$ is reflexive.
\end{proposition}

One could ask if the relation between SEPs and OMC polytopes holds for all graphs, but this is not the case.
Consider, for example, the graph $G = K_4\setminus e$, where $e$ is the edge $24$, and let $\widetilde{G}$ be the directed graph obtained by orienting the edges of $G$ as $\overrightarrow{12}$, $\overrightarrow{23}$, $\overrightarrow{34}$, $\overrightarrow{41}$, and $\overrightarrow{13}$. The polar dual of the SEP of $\widetilde{G}$ is given in \cite[Example 5.2]{sepgentoregmats}. The polytopes $\mc{S}_{\widetilde{G}}^\vee$ and $P_{\cM^*(\widetilde{G})}$, pictured in \Cref{SEP_duals}, are distinct polytopes.
\begin{figure}[h!]
	\centering
	\includegraphics[width=0.3\textwidth]{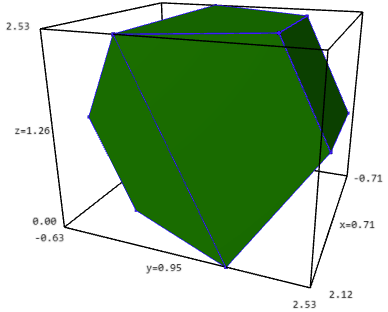}
	\hspace{24pt}
	\includegraphics[width=0.32\textwidth]{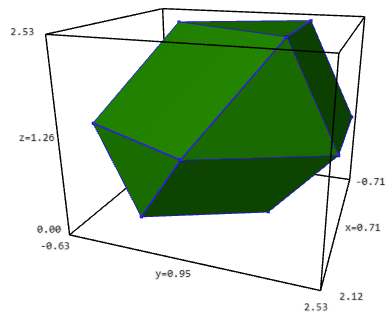}
	\caption{The polar dual of the symmetric edge polytope $\mc{S}_{\widetilde{G}}$ (left) and the OMC polytope $P_{\cM^*(\widetilde{G})}$ (right).}
	\label{SEP_duals}
\end{figure}
In particular, one can see that the facets of $\mc{S}_{\widetilde{G}}^\vee$ are all quadrilaterals, whereas the facets of $P_{\cM^*(\widetilde{G})}$ are both quadrilaterals and triangles.

\subsection{Face structure of $\Poly_{n-1}$} \label{sec_face_structure}

Next, we study the face structure of $\Poly_{n-1}$. 
In \cite[Lemma 13.4]{AguiarArdila}, a concrete bijection is given between the proper faces of a graphic zonotope $\Zono_G$ and the pairs $(f,\mc{O})$, where $f\subseteq E$ is a flat of the graph $G$ and $\mc{O}$ is an acyclic orientation of the contraction $G/f$. 
To construct the face corresponding to $(f,\mc{O})$, one first constructs the graph $g(f,\mc{O})$ obtained from $G$ by fixing the edges in $f$ and replacing each edge $\{v,w\}\notin f$ with the half-edge $\{v\}$, where $v\to w$ in $\mc{O}$.
The face corresponding to $(f,\mc{O})$ is the zonotope
\begin{align*}
    \Zono_{g(f,\mc{O})} &= \sum_{ \stackrel{\{v_j\} \text{is a half-edge}}{ \text{ \rm of } g(f,\mc{O}) }} \Delta_{j} + \sum_{ \stackrel{e_j \text{is an edge}}{ \text{ \rm of } g(f,\mc{O}) }} \Delta_{j,j+1},
\end{align*}
where $\Delta_{j} = \be_j$ and $\Delta_{j,j+1} = [\be_j,\be_{j+1}]$.
In the following theorem, we use this description in the case $G=C_n$ to describe the faces of $\Poly_{n-1}$.

\begin{theorem}\label{thm_faces}
    For $0\le i<n-1$, the following is a bijection
    \begin{align*}
    \{(S,T)\, : \, \varnothing\subsetneq S \subseteq T\subsetneq [n],\ |T\setminus S|=i\}
    &\to\{i\text{-dimensional faces of } \Poly_{n-1}\}\\
    (S,T)
    &\mapsto \sum_{j\in S}\bu_j+\sum_{j\in T\setminus S}[\bzero,\bu_j]
    .\end{align*}
\end{theorem}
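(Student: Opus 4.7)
The plan is to exploit the fact, established in \Cref{itsazonotope}, that $\Poly_{n-1}$ is the zonotope $\Zono(\widehat{\bu}_1,\ldots,\widehat{\bu}_n)$ (equivalently, after the projection $\pi$, the zonotope $\Zono(\bu_1,\ldots,\bu_n)$ with $\bu_j=\be_j$ for $j<n$ and $\bu_n=-\mathbb{1}$). Rather than invoke the Aguiar--Ardila description via flats and acyclic orientations of $C_n$, I would argue directly from the general fact that for any zonotope $Z=\sum_{j=1}^n[\bzero,\bv_j]$ the face maximizing a linear functional $\ell$ is
\[
F_\ell=\sum_{j:\,\ell(\bv_j)>0}\bv_j+\sum_{j:\,\ell(\bv_j)=0}[\bzero,\bv_j],
\]
and every proper face arises this way. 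Thus the proper faces of $\Poly_{n-1}$ are in bijection with the sign patterns $(\sigma_1,\ldots,\sigma_n)\in\{+,0,-\}^n$ that are realized as $\sigma_j=\mathrm{sign}(\ell(\bu_j))$ for some linear functional $\ell$ on the ambient space.

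The first substantive step is to characterize the realizable sign patterns. Since $\bu_1,\ldots,\bu_{n-1}$ is a basis of $\RR^{n-1}$ and $\bu_n=-(\bu_1+\cdots+\bu_{n-1})$, the unique linear relation among the $\bu_j$ is $\sum_{j=1}^n\bu_j=\bzero$. It follows that the tuples $(\ell(\bu_1),\ldots,\ell(\bu_n))$ arising from linear functionals $\ell$ are exactly the elements of the hyperplane $\{(\lambda_1,\ldots,\lambda_n):\sum_j\lambda_j=0\}$, and a non-zero sign pattern is realizable if and only if it has at least one $+$ entry and at least one $-$ entry. Setting $S=\{j:\sigma_j=+\}$ and $T=S\cup\{j:\sigma_j=0\}$, realizability becomes $\varnothing\subsetneq S\subseteq T\subsetneq[n]$, and the corresponding face is exactly $\sum_{j\in S}\bu_j+\sum_{j\in T\setminus S}[\bzero,\bu_j]$, matching the formula in the theorem.

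The final step is the dimension count. The face $F_{(S,T)}$ is a translate of the zonotope $\sum_{j\in T\setminus S}[\bzero,\bu_j]$, whose dimension equals the rank of $\{\bu_j:j\in T\setminus S\}$. Because the only linear dependence among the $\bu_j$ is $\sum_{j=1}^n\bu_j=\bzero$, any proper subset $J\subsetneq[n]$ yields a linearly independent collection, and since $S\neq\varnothing$ we have $T\setminus S\subsetneq[n]$. Hence $\dim F_{(S,T)}=|T\setminus S|=i$, which matches the range $0\le i<n-1$: the all-zero sign pattern (giving $\Poly_{n-1}$ itself, of dimension $n-1$) is correctly excluded, as it would force $S=\varnothing$.

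I do not anticipate a real obstacle here; the proof is essentially a translation of the realizability of sign patterns, which is controlled entirely by the single relation $\sum_j\bu_j=\bzero$. The one small subtlety worth stating clearly is that \emph{every} proper subset of $\{\bu_1,\ldots,\bu_n\}$ is independent, which is what makes $|T\setminus S|$ literally equal to the dimension of the face without any further adjustment.
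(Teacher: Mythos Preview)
Your argument is correct and takes a genuinely different route from the paper's. The paper first passes through the affine isomorphism $\phi:\Poly_{n-1}\to\Zono_{C_n}$ of \Cref{prop: zono is graph zono}, then invokes the Aguiar--Ardila description of faces of a graphic zonotope as pairs consisting of a flat and an acyclic orientation of the contraction, builds an explicit bijection between such pairs for $C_n$ and the pairs $(S,T)$, confirms it by a counting argument, and finally pulls each face $\Zono_{g(f,\mc{O})}$ back along $\phi$ to obtain the formula $\sum_{j\in S}\bu_j+\sum_{j\in T\setminus S}[\bzero,\bu_j]$. You instead work intrinsically with $\Poly_{n-1}=\Zono(\bu_1,\ldots,\bu_n)$ and use the general correspondence between proper faces of a zonotope and its nonzero covectors; because the covectors are governed by the single linear relation $\sum_j\bu_j=\bzero$, the realizability condition (at least one $+$ and at least one $-$) translates directly into $\varnothing\subsetneq S$ and $T\subsetneq[n]$, and the fact that every proper subset of the generators is independent yields the dimension formula at once. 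Your route is shorter and avoids both the detour through $C_n$ and the external lemma; the paper's route makes the cycle-graph combinatorics explicit and stays entirely elementary. If you keep your argument, you should cite the faces--covectors bijection precisely (it is in \cite{orientedmatroids}), since injectivity of the sign-pattern-to-face map is the one place where you are silently invoking nontrivial general theory.
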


\begin{proof} 
    To show the desired bijection, we use the fact that $\Poly_{n-1}$ is affinely isomorphic to the graphic zonotope $\Zono_{C_n}$. 
    We first identify a bijection between the set of pairs $(S,T)$ and the set of pairs $(f,\mc{O})$. Then, we compose this bijection with the map $\phi$ in \Cref{prop: zono is graph zono}.
	
    Start with a planar realization of $C_n$. Throughout this proof, our labels are integers modulo $n$; in particular, $n+1 = 1$. Denote by $v_1,\dots,v_n$ the vertices of $C_n$, oriented so that the labels increase clockwise, and let $e_j\in E$ denote the edge $\{v_j, v_{j+1}\}$. Let $f_I = \{e_j\mid j\in I\}$. Then the flats of $C_n$ are $f_I$ for $I = [n]$ and $I\in\binom{[n]}{k}$ for $k=0,\dots,n-2$. Since we are interested in proper faces and the face of the polytope corresponding to $I=[n]$ is the polytope itself, we exclude this case.
	
    Consider the pair $(S,T)$ and the flat $f = f_{T\setminus S}$. We construct the following orientation $\mc{O}$ from $S$, assigning each edge $e_j\in C_n/f$ the orientation $\mc{O}_j$:
    \begin{align} \label{eqn:orientation}
		\mc{O}_j = 
		\begin{cases}
		  \text{counterclockwise}, &j\in S \\
            \text{clockwise}, &j\notin S.
		\end{cases}   
    \end{align}

    This map is surjective: consider a flat and orientation pair $(f,\mc{O})$. Then 
    \begin{align*}
		S &= \{j\in[n]\mid e_j \text{ has a counterclockwise orientation in } \mc{O}\} \\
		T &= S\cup\{j\in[n]\mid e_j\in f\}.
    \end{align*}
    Observe that $S\neq\emptyset$ since $S=\emptyset$ would yield an all-clockwise orientation and that $T\neq[n]$ since $T=[n]$ would yield an all-counterclockwise orientation, neither of which are acyclic.
    	
    To see that we have a bijection, we compare the cardinality of both sets.
	
    For $\{(S,T)\mid \emptyset\subsetneq S\subseteq T\subsetneq [n]\}$, since $S\neq\emptyset$ and $T\neq [n]$, $T\setminus S$ can have size $k=0,\dots,n-2$, and for each value of $k$, there are $\binom{n}{k}$ possible choices for elements of $T\setminus S$. Now, given $T\setminus S$ with $|T\setminus S| = k$, we examine the remaining $n-k$ elements of $[n]$. Each element can either be in $S$ or not, giving $2^{n-k}$ total options. However, since $S\neq\emptyset$ and $T\neq[n]$, we subtract two of those options. Thus the cardinality is $\sum_{k=0}^{n-2}\binom{n}{k}(2^{n-k}-2)$.
	
    For $\{(f,\mathcal{O})\mid f\neq E \text{ flat of } C_n, ~ \mc{O} \text{ acyclic orientation of } C_n/f\}$, we obtain a flat $f_I$ for each $I\in\binom{[n]}{k}$ for $k=0,\dots,n-2$. For each flat $f_I$ with $|I|=k$, we need to count the acyclic orientations of $C_n/f_I$. $C_n/f_I$ is now another cycle graph, with $n-k$ edges. The only orientations that are not acyclic are the all-clockwise and all-counterclockwise orientations. Thus overall we obtain $2^{n-k}-2$ acyclic orientations. Putting this all together, the cardinality is $\sum_{k=0}^{n-2}\binom{n}{k}(2^{n-k}-2)$, as desired.
	
    Thus we have a bijection 
     \begin{align*}
         \{(S,T)\mid \emptyset\subsetneq S\subseteq T\subsetneq [n]\} \leftrightarrow \{(f,\mathcal{O})\mid f\neq E \text{ flat of } C_n, ~ \mc{O} \text{ acyclic orientation of } C_n/f\}.
     \end{align*}
    We next want to relate the elements $(f,\mc{O})$ with proper faces of $\Zono_{C_n}$. 
    Applying \cite[Lemma 13.4]{AguiarArdila} and the above bijection, we get 
    \begin{align*}
		\Zono_{g(f_{T\setminus S},\mc{O})} &= \sum_{ \stackrel{\{v_j\} \text{is a half-edge}}{ \text{ \rm of } g(f_{T\setminus S},\mc{O}) }} \Delta_{j} + \sum_{ \stackrel{e_j \text{is an edge}}{ \text{ \rm of } g(f_{T\setminus S},\mc{O}) }} \Delta_{j,j+1} \\
		&= \sum_{j\in T\setminus S} \Delta_{j,j+1} + \sum_{j\in S} \Delta_{j+1} + \sum_{j\notin T} \Delta_{j},
    \end{align*}
    where $\Delta_{j} = \be_j$ and $\Delta_{j,j+1} = [\be_j,\be_{j+1}]$. Our final step, then, is to determine which faces of $\Poly_{n-1}$ correspond to each $\Zono_{g(f,\mc{O})}$. Towards this step, consider the map $\phi: \Poly_{n-1}\to\Zono_{C_n}$ given in \Cref{prop: zono is graph zono}. We claim that 
    \begin{align*}
		\sum_{j\in S} \bu_j + \sum_{j\in T\setminus S} [\bzero,\bu_j]\mapsto \Zono_{g(f_{T\setminus S},\mc{O})},
    \end{align*}
    where $\mc{O}$ is the orientation defined in (\ref{eqn:orientation}). 
	
    Let $\bv\in \sum\limits_{j\in S} \bu_j + \sum\limits_{j\in T\setminus S} [0,\bu_j]$. To determine $\phi(\bv)$, we rewrite $\bv$ as a single sum:
    \begin{align*}
		\bv &= \sum_{j\in S} \bu_j + \sum_{j\in T\setminus S} \lambda_j \bu_j,  &&\lambda_j\in[0,1] ~ \text{ for } j\in T\setminus S \\
		&= \sum_{j\in[n]} \lambda_j  \bu_j, &&\lambda_j = 
		\begin{cases}
		  1, &j\in S \\
	   	0, &j \notin T \\
		  x \in[0,1], &j\in T\setminus S.
		\end{cases}
    \end{align*}
    Then, by the proof of \Cref{prop: zono is graph zono},
    \begin{align*}
		\phi(\bv) &= ((1-\lambda_1)\be_1 + \lambda_1\be_2) + \dots + ((1-\lambda_{n-1})\be_{n-1} + \lambda_{n-1}\be_n) + ((1-\lambda_n)\be_n + \lambda_n\be_1).
    \end{align*}
    Now, we can consider each of the three cases:
    \begin{enumerate}
		\item If $j\in S$, then $\lambda_j = 1$, so $(1-\lambda_{j})\be_{j} + \lambda_{j}\be_{j+1} = \be_{j+1} = \Delta_{j+1}$.
		\item If $j\notin T$, then $\lambda_j = 0$, so $(1-\lambda_{j})\be_{j} + \lambda_{j}\be_{j+1} = \be_{j} = \Delta_{j}$.
		\item If $j\in T\setminus S$, then $\lambda_j\in[0,1]$, so $(1-\lambda_{j})\be_{j} + \lambda_{j}\be_{j+1} \in \Delta_{j,j+1}$.
    \end{enumerate}
    Thus $\phi(\bv)\in\Zono_{g(f_{T\setminus S},\mc{O})}$ as desired. Moreover, varying $\lambda_j$ in the third case yields the whole interval $\Delta_{j,j+1}$, and thus this map is surjective.
\end{proof}

The following follows from the description of the face corresponding to $(S,T)$ in the preceding theorem.

\begin{corollary}\label{cor_face_lattice}
    The face lattice of $\Poly_{n-1}$ is isomorphic to the poset $(\{(S,T)\, : \, \varnothing\subsetneq S \subseteq T\subsetneq [n]\},\preceq)$, where $(S_1,T_1)\preceq (S_2,T_2)$ if and only if $T_1\subseteq T_2$ and $S_2\subseteq S_1$.
\end{corollary}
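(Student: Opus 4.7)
The plan is to leverage the bijection from \Cref{thm_faces}, namely
\[
\Phi:(S,T)\mapsto F_{S,T}:=\sum_{j\in S}\bu_j+\sum_{j\in T\setminus S}[\bzero,\bu_j],
\]
and to verify that $\Phi$ is an isomorphism of posets, i.e.\ that $F_{S_1,T_1}\subseteq F_{S_2,T_2}$ if and only if $T_1\subseteq T_2$ and $S_2\subseteq S_1$.

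The key intermediate step is to identify the vertex set of $F_{S,T}$ as exactly $\{\bu_I:S\subseteq I\subseteq T\}$. The inclusion $\supseteq$ is immediate: for each such $I$ the expression $\bu_I=\sum_{j\in S}\bu_j+\sum_{j\in I\setminus S}\bu_j$ puts $\bu_I$ in $F_{S,T}$ via a $0/1$ choice of coefficients, and $\bu_I$ is a vertex of $\Poly_{n-1}$ by \Cref{vertices}, hence a vertex of $F_{S,T}$. For the reverse inclusion, I would use that $\bu_1,\ldots,\bu_n$ admit exactly one linear relation, $\sum_j\bu_j=\bzero$, as is visible from their description $\be_1,\ldots,\be_{n-1},-\mathbb{1}\in\RR^{n-1}$ (whose kernel is spanned by $\mathbb{1}$). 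Writing an arbitrary point of $F_{S,T}$ as $\sum_j\mu_j\bu_j$ with $\mu_j=1$ on $S$, $\mu_j=0$ off $T$, $\mu_j\in[0,1]$ on $T\setminus S$, and setting this equal to $\bu_I=\sum_{j\in I}\bu_j$, the uniqueness of the relation forces $\mu_j-\nu_j=c$ for a constant $c$ independent of $j$, where $\nu_j=1$ if $j\in I$ and $\nu_j=0$ otherwise. The condition $\mu_j=1$ at any $j\in S\neq\emptyset$ forces $c\in\{0,1\}$, while the condition $\mu_j=0$ at any $j\in[n]\setminus T\neq\emptyset$ forces $c\in\{0,-1\}$, hence $c=0$. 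Thus $\mu_j=\nu_j$, and the boundary conditions on $\mu_j$ translate precisely to $S\subseteq I\subseteq T$.

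Once the vertex description is in hand, the order check is purely combinatorial: since a face of a polytope is determined by its vertex set, $F_{S_1,T_1}\subseteq F_{S_2,T_2}$ iff $\{\bu_I:S_1\subseteq I\subseteq T_1\}\subseteq\{\bu_I:S_2\subseteq I\subseteq T_2\}$. Testing with $I=S_1$ yields $S_2\subseteq S_1$, and testing with $I=T_1$ yields $T_1\subseteq T_2$; conversely, if $S_2\subseteq S_1$ and $T_1\subseteq T_2$, any $I$ with $S_1\subseteq I\subseteq T_1$ already satisfies $S_2\subseteq I\subseteq T_2$. I expect the main technical obstacle to be the reverse inclusion in the vertex description: the Minkowski-sum representation of $F_{S,T}$ does not \emph{a priori} preclude other vertices of $\Poly_{n-1}$ from lying in $F_{S,T}$, and the rigidity of the single relation $\sum_j\bu_j=\bzero$ together with the strict containments $\emptyset\subsetneq S$ and $T\subsetneq[n]$ is exactly what closes this gap.
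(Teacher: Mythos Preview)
Your proof is correct and is essentially a fleshed-out version of the paper's own argument, which consists of the single sentence ``follows from the description of the face corresponding to $(S,T)$ in the preceding theorem.'' The paper leaves the containment check implicit, whereas you make it explicit by identifying the vertex set of $F_{S,T}$ as $\{\bu_I : S\subseteq I\subseteq T\}$ via the unique linear relation $\sum_j\bu_j=\bzero$ together with $\emptyset\subsetneq S$ and $T\subsetneq[n]$; this is exactly the right way to justify the step the paper skips.
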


Given a $d$-dimensional polytope $\cP$ and $0\le i\le d$, let $f_i(\cP)$ denote the number of $i$-dimensional faces of $\cP$.
The $f$-polynomial of $\cP$ is then $f_\cP(t):=\sum_{i=0}^df_i(\cP)\,t^i$. Using the theorem above, we immediately obtain the $f$-polynomial of $\Poly_{n-1}$, which due to \Cref{prop: zono is graph zono} recovers a formula by Gruji\'{c}.

\begin{corollary}\cite[Proposition 5.2]{Gru}
    The $f$-polynomial of $\Poly_{n-1}$ is $f_{\Poly_{n-1}}(t)=t^{n-1}+\sum_{i=0}^{n-2}(2^{n-i}-2)\binom{n}{i}t^i$.
\end{corollary}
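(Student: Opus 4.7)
The plan is to read off the $f$-polynomial directly from \Cref{thm_faces}. That bijection tells me that for each $0 \le i \le n-2$ the number of $i$-dimensional faces of $\Poly_{n-1}$ equals the number of pairs $(S,T)$ with $\varnothing\subsetneq S\subseteq T\subsetneq [n]$ and $|T\setminus S|=i$. Separately, $\Poly_{n-1}$ itself is the unique face of dimension $n-1$, contributing the leading term $t^{n-1}$ to $f_{\Poly_{n-1}}(t)$.

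The counting step is essentially a two-stage choice, already done inside the proof of \Cref{thm_faces}. First I would fix the set $U:=T\setminus S$ of size $i$, which can be done in $\binom{n}{i}$ ways. With $U$ fixed, $S$ is an arbitrary subset of the $n-i$ elements of $[n]\setminus U$, except that $S\neq\varnothing$ (to ensure $S$ is a proper nonempty subset) and $S\neq [n]\setminus U$ (which is exactly the condition ensuring $T=S\cup U\neq [n]$). This rules out exactly two of the $2^{n-i}$ subsets, leaving $2^{n-i}-2$ valid choices for $S$. Multiplying gives $\binom{n}{i}(2^{n-i}-2)$ faces of dimension $i$ for each $0\le i\le n-2$.

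Summing these contributions and appending the top-dimensional term yields
\[
f_{\Poly_{n-1}}(t) \;=\; t^{n-1} + \sum_{i=0}^{n-2} (2^{n-i}-2)\binom{n}{i}\, t^i,
\]
as claimed. There is no genuine obstacle here: once \Cref{thm_faces} is in hand, the corollary is pure binomial bookkeeping. The only minor point to watch is remembering that the bijection of \Cref{thm_faces} covers only the proper faces (it was stated for $0\le i<n-1$), so the top-dimensional face must be added by hand as the $t^{n-1}$ term.
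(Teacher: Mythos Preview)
Your proposal is correct and is exactly the approach the paper takes: the corollary is stated as immediate from \Cref{thm_faces}, and the count of pairs $(S,T)$ with $|T\setminus S|=i$ that you carry out is precisely the cardinality computation already performed inside the proof of \Cref{thm_faces}. Your observation that the top-dimensional face must be added separately because the bijection only covers proper faces is also on point.
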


\subsection{Ehrhart theory of $\Poly_{n-1}$} \label{sec_ehrhart}

We now introduce Ehrhart theory. The \textbf{lattice point enumerator} of a polytope $P$ is
\begin{align*}
    L_{P}(t) := \#\left(tP \cap \ZZ^d\right).
\end{align*}
\begin{theorem}\cite{ehrhartpolynomial}\label{ehrpoly}
    Let $P$ be an integral convex $d$-polytope; then $L_{P}(t)$ is a polynomial in $t$ of degree $d$ with leading term $\vol(P)$ and constant term 1.
\end{theorem}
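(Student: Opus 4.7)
The plan is to prove Ehrhart's theorem via the standard cone/generating-function approach, reducing to simplices and then assembling the polytope from a triangulation.

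First, I would triangulate $P$ into a finite collection of $d$-dimensional lattice simplices whose vertices are lattice points of $P$ (possible because $P$ is integral; one can take, e.g., a pulling triangulation). The count $L_P(t)$ will then be obtained by inclusion--exclusion over the simplices in the triangulation together with their shared lower-dimensional faces, so it suffices to prove the theorem for a lattice simplex $\Delta$ and then argue that the resulting combination is a polynomial of the correct degree.

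For a single lattice $d$-simplex $\Delta\subset\RR^d$ with vertices $v_0,\ldots,v_d$, I would pass to the cone
\[
\cone(\Delta)\;=\;\{\,\lambda_0(1,v_0)+\cdots+\lambda_d(1,v_d)\;:\;\lambda_i\ge 0\,\}\subset\RR^{d+1},
\]
whose intersection with the hyperplane $\{x_0=t\}$ is $t\Delta$ (embedded at height $t$). Every lattice point of $\cone(\Delta)$ decomposes uniquely as a lattice point of the half-open fundamental parallelepiped
\[
\Pi_\Delta\;=\;\{\,\mu_0(1,v_0)+\cdots+\mu_d(1,v_d)\;:\;0\le\mu_i<1\,\}
\]
plus a nonnegative integer combination of $(1,v_0),\ldots,(1,v_d)$. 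Grouping lattice points of $\Pi_\Delta$ by their first coordinate produces a polynomial $h^*_\Delta(z)=\sum_{i=0}^d h^*_i z^i$, and summing the geometric series in each generator yields
\[
\mathrm{Ehr}_\Delta(z)\;:=\;\sum_{t\ge 0}L_\Delta(t)\,z^t\;=\;\frac{h^*_\Delta(z)}{(1-z)^{d+1}}.
\]
Expanding the right-hand side via the standard identity $\frac{1}{(1-z)^{d+1}}=\sum_{t\ge 0}\binom{t+d}{d}z^t$ and comparing coefficients shows that $L_\Delta(t)$ agrees for all $t\ge 0$ with a polynomial in $t$ of degree at most $d$; the degree is exactly $d$ since the leading term of $\binom{t+d}{d}$ contributes $h^*_\Delta(1)/d!$, and $h^*_\Delta(1)$ counts lattice points in $\Pi_\Delta$, which is nonzero.

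The remaining steps are to reassemble $P$ from its triangulation and verify the claims about leading coefficient and constant term. For polynomiality of $L_P$, an inclusion--exclusion over the simplices (subtracting contributions of shared faces, each of which is itself a lattice polytope of smaller dimension) writes $L_P(t)$ as a $\ZZ$-linear combination of polynomials, hence a polynomial. The leading coefficient equals $\vol(P)$ because $L_P(t)/t^d$ is a Riemann-sum approximation of $\vol(P)$ as $t\to\infty$, and this limit must coincide with the leading coefficient. Finally, $L_P(0)=1$ by plugging $t=0$ into the definition: $0\cdot P=\{\mathbf{0}\}$ contains exactly one lattice point. The main obstacle in this plan is the combinatorial bookkeeping in the inclusion--exclusion step: one must ensure that polynomials on overlapping faces cancel correctly so that the resulting function on $\ZZ_{\ge 0}$ is a single polynomial of degree $d$ rather than only a quasi-polynomial, which is where integrality of $P$ is essential.
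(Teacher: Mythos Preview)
The paper does not prove this theorem: it is stated with a citation to Ehrhart's original work and used as background. Your outline is the standard modern proof (as in Beck--Robins, \emph{Computing the Continuous Discretely}), and it is essentially correct.

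One point worth tightening: your argument for the constant term (``$L_P(0)=1$ because $0\cdot P=\{\mathbf 0\}$'') is only convincing once you know that the polynomial you produced agrees with the counting function at $t=0$, not merely for $t\ge 1$. In your setup this does hold---the inclusion--exclusion identity among the indicator functions of the $t\Delta_i$ is valid for every $t\ge 0$, and your generating-function argument for a simplex already matches the polynomial at $t=0$---but it is worth saying so explicitly, since otherwise one is left to verify an Euler-characteristic identity by hand. Your closing remark about quasi-polynomials is also slightly misplaced: integrality of $P$ is used already in the simplex step (to ensure each $h^*_\Delta$ has degree at most $d$, so that each $L_\Delta$ is a genuine polynomial rather than a quasi-polynomial), not in the inclusion--exclusion bookkeeping.
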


When $P$ is integral, we call $L_P(t)$ the \textbf{Ehrhart polynomial} of $P$ and sometimes denote it $\ehr_P(t)$ instead of $L_P(t)$. 
The generating function of $L_P(t)$ is called the \textbf{Ehrhart series} of $L_\Poly(t)$ and is denoted $\Ehr_P(z)$: 
	\begin{equation*} 
    	\Ehr_P(z) := 1 + \sum_{t\geq1}L_P(t)z^t  = \frac{h_d^*z^d + h_{d-1}^*z^{d-1}+\dots+h_1^*z + h_0^*}{(1-z)^{d+1}}.
	\end{equation*}
	The numerator polynomial $h_P^*(z)$ is called the \textbf{$h^*$-polynomial} of $P$, and when written in vector form, it is called the \textbf{$h^*$-vector}: $h^*(P) := (h^*_0, \dots, h^*_d)$.

Next, we describe the Ehrhart polynomial and series of $\Poly_{n-1}$.
Since Eulerian polynomials will play a role, we proceed to introduce them.
For a positive integer $n$, the \textbf{Eulerian polynomial} is $A_n(t)=\sum_{i=0}^{n-1}A(n,i)\,t^i$,
where the \textbf{Eulerian number} $A(n,i)$ is the number of permutations in~${S}_n$ with exactly $i$ descents. The Ehrhart polynomial of the dual polytope to the convex hull of the root system
of type $A$ was computed in \cite[Lemma 5.3]{higashitanikummermichalek}.
The following proposition computes the Ehrhart theory of $\Poly_{n-1}$ as a simple corollary of \cite[Exercise 4.64b]{stanleyec1}.

\begin{proposition}
	The Ehrhart polynomial of $\Poly_{n-1}$ is
	\begin{align*}
		\ehr_{\Poly_{n-1}}(t) = (t+1)^n - t^n = \sum_{k=0}^{n-1}\binom{n}{k}t^k.
	\end{align*}
	The Ehrhart series of $\Poly_{n-1}$ is
	\begin{align*}
		\Ehr_{\Poly_{n-1}}(z) = \frac{A_n(z)}{(1-z)^n}.
	\end{align*}
    \label{ehrdA}
\end{proposition}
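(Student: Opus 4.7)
The plan is to compute the Ehrhart polynomial by a direct lattice-point count and then deduce the series from a standard Eulerian generating-function identity. I would work with the full-dimensional projection $\pi(\Poly_{n-1})=\Zono(\be_1,\ldots,\be_{n-1},-\mathbb{1})\subset\RR^{n-1}$ established in \Cref{sec_poly_details}, which has the same lattice point count as $\Poly_{n-1}$.

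First, I would translate membership: $(y_1,\ldots,y_{n-1})\in t\pi(\Poly_{n-1})$ iff there exist $\mu_1,\ldots,\mu_n\in[0,t]$ with $y_i=\mu_i-\mu_n$. Solving for $\mu_n$ shows that the set of valid parameters is the interval $[\max_i(0,-y_i),\,t-\max_i(0,y_i)]$, so membership is equivalent to the spread condition
$$\max(0,y_1,\ldots,y_{n-1})-\min(0,y_1,\ldots,y_{n-1})\le t.$$

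Next, to enumerate integer points I would exhibit a bijection between $t\pi(\Poly_{n-1})\cap\ZZ^{n-1}$ and the set of tuples $(z_0,\ldots,z_{n-1})\in\{0,\ldots,t\}^n$ with $\min_i z_i=0$, given by the map $(z_0,\ldots,z_{n-1})\mapsto(z_1-z_0,\ldots,z_{n-1}-z_0)$. The inverse sends $(y_1,\ldots,y_{n-1})$ to the tuple with $z_0:=-\min(0,y_1,\ldots,y_{n-1})$ and $z_i:=z_0+y_i$; the spread condition is precisely what makes this inverse land in $\{0,\ldots,t\}^n$, while the normalization $\min_i z_i=0$ uniquely determines $z_0$ from the $y_i$'s. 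Counting by inclusion--exclusion, the number of such tuples is $(t+1)^n-t^n$ (total tuples minus those with every $z_i\ge 1$), giving the claimed Ehrhart polynomial.

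For the series, I would apply the classical Eulerian identity $\sum_{t\ge 0}(t+1)^n z^t=A_n(z)/(1-z)^{n+1}$, from which
$$\Ehr_{\Poly_{n-1}}(z)=\sum_{t\ge 0}\bigl[(t+1)^n-t^n\bigr]z^t=\frac{A_n(z)-zA_n(z)}{(1-z)^{n+1}}=\frac{A_n(z)}{(1-z)^n}.$$
The main delicate point is establishing the spread inequality and then verifying that the proposed inverse map actually lands in $\{0,\ldots,t\}^n$; once that reduction is in place, both the polynomial count and the series identity follow by elementary manipulations.
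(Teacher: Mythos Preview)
Your argument is correct and takes a genuinely different route from the paper for the Ehrhart polynomial. The paper invokes Stanley's formula for Ehrhart polynomials of graphic zonotopes (\cite[Exercise~4.64b]{stanleyec1}) together with the affine isomorphism $\Poly_{n-1}\cong\Zono_{C_n}$ from \Cref{prop: zono is graph zono}: the coefficient of $t^k$ is the number of $k$-edge subforests of $C_n$, which is $\binom{n}{k}$ since every proper subset of edges of a cycle is a forest. You instead work directly with the full-dimensional model $\Zono(\be_1,\ldots,\be_{n-1},-\mathbb{1})$, derive the spread inequality $\max(0,y_1,\ldots,y_{n-1})-\min(0,y_1,\ldots,y_{n-1})\le t$, and set up an explicit bijection with $\{(z_0,\ldots,z_{n-1})\in\{0,\ldots,t\}^n:\min_i z_i=0\}$. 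Your approach is more self-contained---it avoids both Stanley's black box and the cycle-graph identification---at the cost of a slightly longer elementary computation; the paper's approach is quicker but leans on external machinery. For the Ehrhart series, the two arguments coincide: both reduce $(t+1)^n-t^n$ to $A_n(z)/(1-z)^n$ via the standard Eulerian generating-function identity.
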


\begin{proof}
    We apply Stanley's method for computing the Ehrhart polynomial of graphic zonotopes \cite[Exercise 4.64b]{stanleyec1}. Since $\Poly_{n-1}\cong \Zono_{C_n}$, we need to count the number of subforests of $C_n$ with $k$ edges to obtain the coefficient of $t^k$ in $\ehr_{\Poly_{n-1}}(t)$. Since $C_n$ is itself just a cycle, any subset of $k\neq n$ edges forms a forest. In particular, the coefficient of $t^k$ is $\binom{n}{k}$ for $0\leq k\leq n-1$.

    Then the Ehrhart series is
	\begin{align*}
		\Ehr_{\Poly_{n-1}}(z) 
		&
		= 1+\sum_{t\geq 1}\left((t+1)^n - t^n\right)z^t 
        \\&
		= \sum_{t\geq 0}(t+1)^n z^t - \sum_{t\geq 1} t^n z^t 
		\\ 	&
		= \sum_{t\geq 1} \frac{t^n}{z} z^t - \sum_{t\geq 1} t^n z^t 
		\\	&
		= \left(\frac{1-z}{z}\right)\sum_{t\geq 1}t^n z^t 
        \\
		&= (1-z)\frac{\sum_{k=1}^{n} A(n,k)z^{k-1}}{(1-z)^{n+1}} 
        \\
		&= \frac{A_n(z)}{(1-z)^n},
	\end{align*}
	where the last equation follows from a well known identity of the Eulerian polynomial, see e.g.\ \cite[Proposition 1.4.4]{stanleyec1}.\footnote{We warn the reader that the conventions in \cite{stanleyec1} for Eulerian polynomials differ from ours by a factor of $t$.}
\end{proof}

We remark that the $h^*$-polynomial of $\Poly_{n-1}$ is strikingly similar to the $h^*$-polynomial of the $n$-dimensional unit cube $\square_{n}$, which is
\begin{equation*}
    \Ehr_{\square_n}(z) = \frac{A_n(z)}{(1-z)^{n+1}}.
\end{equation*}

The $h^*$-polynomial appearing in the previous Proposition has log-concave coefficients. We ask whether this phenomenon holds more generally for oriented matroid circuit polytopes.
\begin{conjecture}\label{conj}
Let $\mathcal{M}$ be an oriented matroid, and let $P_{\mathcal{M}}$ be its oriented matroid circuit polytope. The $h^*$-vector of $P_{\mathcal{M}}$ is log-concave.
\end{conjecture}

We have verified Conjecture~\ref{conj} computationally for all oriented matroids in Finschi's catalog of isomorphism classes of oriented matroids with cardinality at most $7$~\cite{FinschiCatalogOM} (see also Finschi and Fukuda~\cite{FinschiFukuda2002} for the underlying
generation methods). In each case, the $h^*$-vector of the associated OMC polytope was log-concave.


\subsection{Equivariant Ehrhart theory of $\Poly_{n-1}$} 
\label{sec_eet}

We begin by introducing equivariant Ehrhart theory for general polytopes. The polytopes we are examining are all full-dimensional, and thus their affine spans contain the origin, so we will focus on that specific case; for the more general construction, see Stapledon~\cite{stapledonequivariant}.
Let $P\subset\RR^n$ be a $n$-dimensional lattice polytope and $G$ be a group that acts linearly on $\ZZ^n$ and such that $P$ is invariant under the action of $G$.  Let $\sf{M}$ be the intersection of $\ZZ^n$ with the linear span of $P$. Now $\sf{M}$ is $G$-invariant, and we obtain a representation $\rho:G\to GL(\sf{M})$. 

Let $\chi_{tP}$ denote the permutation character associated to the action of $G$ on the lattice points in the $t$th dilate of $P$. Given $g\in G$, let 
\begin{align*}
	P^g = \{p\in P\mid g\cdot p = p\},
\end{align*}
that is, $P^g$ is the polytope of points $p\in P$ fixed by $g$. In \cite[Lemma 5.2]{stapledonequivariant} it is shown that $\chi_{tP}(g) = L_{P^g}(t)$.

Consider the function $H^*:G\to\ZZ[[z]]$ given by
\begin{align*}
	H^*[z](g) := (1-z)\det(I-g\cdot z)\sum_{t\geq 0}\chi_{tP}(g)z^t,
\end{align*}
called the \textbf{equivariant $H^*$-series} of $P$. If we rearrange the above equation to isolate the sum, we obtain something that looks like an Ehrhart series; indeed, when we consider the identity element $e\in G$, $H^*[z](e)$ is in fact the $h^*$-polynomial of~$P$.

We now describe the linear $S_n$-action on $\Poly_{n-1}$ with respect to which we will compute the equivariant Ehrhart theory. 
Stapledon \cite{stapledonequivariant} uses a connection with the cohomology of the toric variety of the permutahedron and previous computations of Stembridge \cite{stembridgecohomology} to compute the equivariant Ehrhart theory of $\Poly_{n-1}$, and therefore shows that $\Poly_{n-1}$ satisfies \cite[Conjecture 12.1]{stapledonequivariant}. We do so directly using the fixed polytopes. 
Given $\sigma\in S_n$, we describe this action using the $(n-1)\times(n-1)$ matrix $M_\sigma$, defined by
\begin{align*}
	({M_\sigma})_{ij} = 
	\begin{cases}
		1 &\text{ if } \sigma(j) = i, \\
		-1 &\text{ if } \sigma(j) = n, \\
		0 &\text{ otherwise.}
	\end{cases}
\end{align*}
Note that for a vertex $\bu_{a_1\dots a_m}$ of $\Poly_{n-1}$,
\begin{equation*}
	M_\sigma\cdot \bu_{a_1\dots a_m} = \bu_{\sigma(a_1)\dots \sigma(a_m)},
\end{equation*}
that is, the $S_n$ action applied to a vertex can be described as $\sigma\in S_n$ shuffling the elements of the set labeling that vertex. 
For example, the transposition $(23)$ in $\Poly_{2}$ fixes the vertices $\bu_1$ and $\bu_{23}$, switches $\bu_{2}$ and $\bu_{3}$, and switches $\bu_{12}$ and $\bu_{13}$. One can verify using $M_{(23)}$ that $\Poly_{2}^{(23)}$ is a line segment, as shown in \Cref{fixed23}.

By definition, if $M_\sigma\cdot \bu_I=\bu_J$, then $I$ and $J$ have the same size.  
Moreover, given $I = \{a_1,\dots,a_m\}$ and $J = \{b_1,\dots,b_m\}$, there exists a permutation $\sigma$ that sends $\bu_I$ to $\bu_J$. Thus the orbits of this $S_n$ action are of the form $\{\bu_I \mid |I| = m\}$ for each $1\leq m<n$.

\begin{figure}
    \centering
    \includegraphics[height=1.5in]{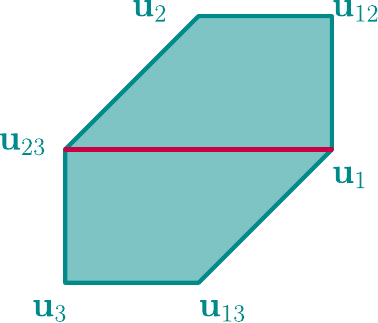}
    \hspace{1cm}
    \includegraphics[height=1.8in]{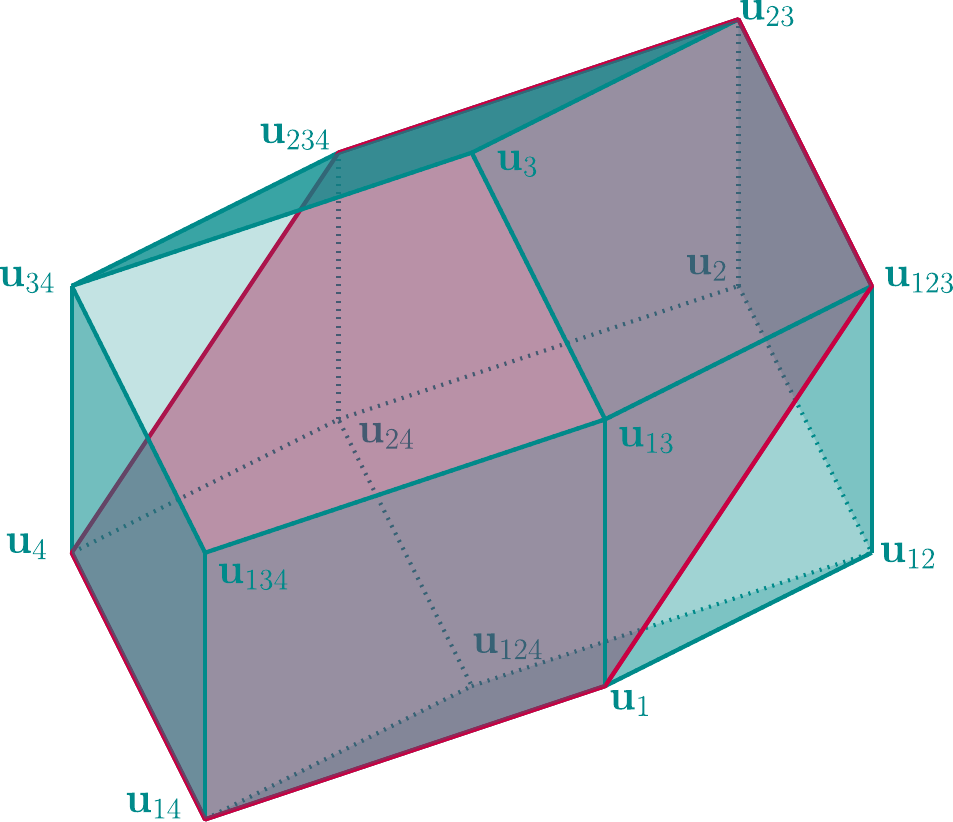}
    \caption{The fixed polytopes $\Poly_{2}^{(23)}$ and $\Poly_{3}^{(23)}$.}
    \label{fixed23}
\end{figure}

We next compute the polynomial $H^*[z](\sigma)$. Let $\sigma\in S_n$ have cycle decomposition $\sigma = \alpha_1\cdots\alpha_k$, where $\alpha_k$ contains $n$. Let $\walpha_i$ be the set of elements of $[n]$ permuted by the cycle $\alpha_i$.

\begin{remark}
    The equivariant Ehrhart theory of graphic zonotopes is computed in \cite{equivarianttechniques}. While $\Poly_{n-1}$ is a graphic zonotope, the action we describe is different than the action described in \cite{equivarianttechniques}. 
    We expand on this in \Cref{sec_2eet}.
\end{remark}

\begin{theorem}\label{thm_fixedpolytopes}
	Let $\Poly_{n-1}^\sigma$ be the fixed polytope of $\sigma\in S_n$, where $\sigma$ has cycle decomposition $\sigma = \alpha_1\cdots\alpha_k$. Then $\Poly_{n-1}^\sigma = \Zono(\textbf{u}_{\walpha_1},\dots,\textbf{u}_{\walpha_k})$. 
    As a consequence, its Ehrhart polynomial is
	\begin{align*}
    	L_{\Poly_{n-1}^\sigma}(t) = \sum_{j=0}^{k-1}\binom{k}{j} t^j = (t+1)^k-t^k
	,\end{align*}
	and its $h^*$-polynomial is the $k$th Eulerian polynomial $A_k(z)$.
\end{theorem}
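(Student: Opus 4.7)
The plan is to first establish the set equality $\Poly_{n-1}^\sigma = \Zono(\bu_{\walpha_1},\ldots,\bu_{\walpha_k})$ by proving both inclusions, and then derive the Ehrhart polynomial and $h^*$-polynomial by exhibiting a unimodular equivalence between this zonotope and $\Poly_{k-1}$, so that Proposition \ref{ehrdA} applies.

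For the containment ``$\supseteq$'', I would first invoke \eqref{eq_bond_expansion} (translated through the projection $\pi$) to write $\bu_{\walpha_l}=\sum_{i\in\walpha_l}\bu_i$. Since $M_\sigma\cdot\bu_i=\bu_{\sigma(i)}$ by the definition of the action, and $\sigma$ permutes the elements of each cycle $\walpha_l$, each vector $\bu_{\walpha_l}$ is fixed by $\sigma$; by linearity, every point of $\Zono(\bu_{\walpha_1},\ldots,\bu_{\walpha_k})$ is fixed as well. To see that this zonotope sits inside $\Poly_{n-1}$, I would rewrite $\sum_l\mu_l\bu_{\walpha_l}=\sum_i\mu_{c(i)}\bu_i$, where $c(i)$ denotes the index of the cycle containing $i$; all coefficients $\mu_{c(i)}$ lie in $[0,1]$, so membership follows from Proposition \ref{itsazonotope}.

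For the reverse containment, let $p\in\Poly_{n-1}^\sigma$ and use Proposition \ref{itsazonotope} to write $p=\sum_{i=1}^n\lambda_i\bu_i$ with $\lambda_i\in[0,1]$. The key idea is averaging over the cyclic subgroup $\langle\sigma\rangle$ of order $m$: since $\sigma^t\cdot p=p$ and $\sigma^t\cdot\sum_i\lambda_i\bu_i=\sum_j\lambda_{\sigma^{-t}(j)}\bu_j$, one gets
$$
p=\frac{1}{m}\sum_{t=0}^{m-1}\sigma^t\cdot p=\sum_{j=1}^n\bar\lambda_j\bu_j,\qquad\text{where}\ \bar\lambda_j:=\frac{1}{m}\sum_{t=0}^{m-1}\lambda_{\sigma^{-t}(j)}\in[0,1].
$$
A direct reindexing gives $\bar\lambda_{\sigma(j)}=\bar\lambda_j$, so $\bar\lambda_j$ is constant on each cycle $\walpha_l$; letting $\mu_l$ be this common value yields $p=\sum_l\mu_l\bu_{\walpha_l}$ with $\mu_l\in[0,1]$.

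For the Ehrhart claims, I would construct a unimodular equivalence between $\Poly_{k-1}$ (in its full-dimensional realization from Section \ref{sec_poly_details}) and $\Poly_{n-1}^\sigma\subseteq\RR^{n-1}$. After relabeling so that $\walpha_k$ is the cycle containing $n$, the vectors $\bu_{\walpha_l}$ for $l<k$ are the $\{0,1\}$-indicator vectors of the pairwise disjoint subsets $\walpha_l\subseteq[n-1]$, while $\bu_{\walpha_k}=-\sum_{l<k}\bu_{\walpha_l}$. The natural candidate is the linear map $\psi\colon\RR^{k-1}\to\RR^{n-1}$ defined by $\be_l\mapsto\bu_{\walpha_l}$ for $l=1,\dots,k-1$, so that $\psi(-\mathbb{1})=\bu_{\walpha_k}$ and hence $\psi(\pi(\Poly_{k-1}))=\Poly_{n-1}^\sigma$. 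The main obstacle is checking that $\psi$ is unimodular onto its image, i.e., that $\thespan_\RR(\bu_{\walpha_1},\ldots,\bu_{\walpha_{k-1}})\cap\ZZ^{n-1}$ is exactly the $\ZZ$-span of $\bu_{\walpha_1},\ldots,\bu_{\walpha_{k-1}}$; this is where I expect to spend the most effort, but it should follow cleanly from the pairwise disjointness of the sets $\walpha_l$. Once this lattice identification is in hand, Proposition \ref{ehrdA} transfers directly to give $L_{\Poly_{n-1}^\sigma}(t)=(t+1)^k-t^k$ and $h^*$-polynomial $A_k(z)$.
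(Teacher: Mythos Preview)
Your proof is correct, and the reverse containment ``$\subseteq$'' takes a genuinely different route from the paper. The paper argues this inclusion by first computing explicitly, from the matrix $M_\sigma$, that $\Poly_{n-1}^\sigma$ lies in the hyperplanes $x_a=x_b$ for $a,b$ in a common cycle not containing $n$ and $x_c=0$ for $c\in\walpha_k\setminus\{n\}$ (the latter requiring a case split on whether $|\walpha_k|=2$ or $|\walpha_k|>2$); it then writes a fixed point as $\sum_{i<k}p_{\walpha_i}\bu_{\walpha_i}$ and shifts by $\lambda_k=\max_i\{-p_{\walpha_i}\}$, invoking the facet inequalities $|x_i-x_j|\le 1$ to force all coefficients into $[0,1]$. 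Your averaging argument over $\langle\sigma\rangle$ bypasses both the hyperplane computation and the shifting step: starting from any zonotope representation $p=\sum_i\lambda_i\bu_i$ with $\lambda_i\in[0,1]$, averaging produces coefficients that are automatically in $[0,1]$ and constant on cycles. This is cleaner and coordinate-free, and it would generalize more readily to other group actions on zonotopes. The paper's approach, on the other hand, makes the affine span of $\Poly_{n-1}^\sigma$ explicit as an intersection of coordinate hyperplanes, which is informative in its own right. For the Ehrhart claims, both proofs proceed by identifying $\Poly_{n-1}^\sigma$ with $\Poly_{k-1}$; the paper asserts unimodular equivalence from primitivity of the $\bu_{\walpha_i}$, whereas you correctly isolate the actual content---that indicator vectors of pairwise disjoint nonempty subsets of $[n-1]$ extend to a $\ZZ$-basis of $\ZZ^{n-1}$---which is indeed straightforward.
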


\begin{proof}
	Consider vertex $\bu_{I}$ of $\Poly_{n-1}$ and $\sigma\in S_n$, where $\sigma$ has cycle decomposition $\sigma = \alpha_1\cdots\alpha_k$; assume $n$ appears in the cycle $\alpha_k$. In order to determine if $\bu_I$ is fixed by $\sigma$, we consider what each cycle does to~$\bu_I$. 
	 \begin{itemize}
	     \item 
	 If $\walpha_i\subseteq I$, then $\alpha_i(\bu_I) = \bu_{I}$ since the elements being permuted are fully contained in $I$.
		\item 
	 If $\walpha_i\cap I = \emptyset$, then $\alpha_i(\bu_I) = \bu_I$ since none of the elements being permuted are contained in $I$.
		\item 
	 If $\walpha_i\cap I \neq\emptyset$ and $\walpha_i\not\subseteq I$, then $\alpha_i(\bu_I)\neq\bu_I$ since some elements of $I$ will be sent to elements not in $I$ and some elements not in $I$ will be sent to elements in $I$.
	 \end{itemize}
	Putting these together, we can see that $\sigma(\bu_I) = \bu_I$ exactly when for each $i$, either $\walpha_i\subseteq I$ or $\walpha_i\cap I = \emptyset$. Since $\bu_I = \sum_{j\in I}\bu_j$, this also tells us that $\Zono(\bu_{\walpha_1},\dots,\bu_{\walpha_k})\subseteq\Poly_{n-1}^\sigma$.
	
	For the opposite containment, observe that $\Poly_{n-1}^\sigma$ is contained in the hyperplanes defined, for $i<k$, by $x_a = x_b$ if $a,b\in\walpha_i$, and $x_c = 0$ for $c\in\walpha_k\setminus\{n\}$. To see why $x_c=0$, we consider two cases.
	First, assume $|\walpha_k|=2$ and write $\alpha_k = (j n)$. 
	The $j$th row of the corresponding matrix $M_\sigma$ will be $-\be_j$, i.e. all zeroes except a $-1$ at the $j$-th entry.
	Applying $M_\sigma$ to a point $p = (p_1, \dots, p_{n-1})\in\RR^{n-1}$, then, will map $p_j$ to $-p_j$. In order for $p$ to be fixed by $M_\sigma$, it must be that $p_j = 0$. 
    
    Suppose, then, that $|\walpha_k| = m>2$; say $\alpha_k = (a_1 a_2 \dots a_{m-1} n)$. The $a_i$th column of the corresponding matrix $M_\sigma$ is $\be_{a_{i+1}}$ and the $a_{m-1}$st column is $-\mathbb{1}$. Applying $M_\sigma$ to $p\in\RR^{n-1}$ yields
    \begin{align*}
    	p_{a_1} &\mapsto -p_{a_{m-1}} \\
    	p_{a_i} &\mapsto p_{a_{i-1}} - p_{a_{m-1}} \text{ for } i\in\{2,\dots,m-1\}.
    \end{align*}
    Solving the system of equations obtained from $p$ being fixed yields $p_{a_i} = 0$ for all $i\in[m-1]$.
    We conclude that for $p = (p_1,\dots,p_{n-1})\in\Poly_{n-1}^\sigma$, $p_a = p_b$ whenever $a,b\in\walpha_i$ for some $i$. Denote by $p_{\walpha_i}$ the value of $p_a$ for $a\in\walpha_i$.

	Now, we need to show that $p = \sum_{i=1}^{k}\lambda_i \bu_{\walpha_i}$ for $0\leq\lambda_i\leq 1$. Since $p\in\Poly_{n-1}$, we know that $0\leq |p_j|\leq 1$ for each $j$. Next, we know that the sum of the $\bu_i$s is 0; observe, then, since the $\walpha_i$s form a partition of $[n]$, that $\bu_{\walpha_k} = -\sum_{i=1}^{k-1}\bu_{\walpha_i}$. It is clear that $p = \sum_{i=1}^{k-1} p_{\walpha_i}\bu_{\walpha_i}$. If each $p_{\walpha_i}\geq 0$, we are done; suppose, then, that this is not the case. Let $\lambda_k = \max_{i\leq k-1} \{-p_{\walpha_i}\}$ and for $i<k$, let $\lambda_i = p_{\walpha_i} + \lambda_k$. Since $-\lambda_k\leq p_{\walpha_i}$ for each $i$, $\lambda_i\geq 0$. Moreover, points in $\Poly_{n-1}$ satisfy $x_j-x_i\leq 1$ and $x_j-x_i\geq -1$, for $i<j$, so $\lambda_i\leq 1$. Then	
	\begin{align*}
		p = \sum_{i=1}^{k} \lambda_i \bu_{\walpha_i}.
	\end{align*}
	Hence $\Poly_{n-1}^\sigma = \Zono(\bu_{\walpha_1},\dots,\bu_{\walpha_k})$. Moreover, observe that $\bu_{\walpha_1},\dots,\bu_{\walpha_{k-1}}$ are linearly independent. Since each $\bu_{\walpha_i}$ is a primitive lattice vector and $\bu_{\walpha_k}=-\sum_{i=1}^{k-1}\bu_{\walpha_i}$, $\Poly_{n-1}^\sigma$ is unimodularly equivalent to $\Poly_{k-1}$. 

    The second claim follows from Proposition \ref{ehrdA}.
\end{proof}

We now prove directly a result that can be recovered from Stapledon \cite[Prop 8.1]{stapledonequivariant} and Stembridge \cite[Cor 6.1]{stembridgecohomology}.

\begin{corollary}\label{itsapolynomial}
    	Let $\sigma\in S_n$ have cycle decomposition $\sigma = \alpha_1\cdots\alpha_k$. The equivariant $H^*$-series of $\Poly_{n-1}$ is given by
	\begin{align*}
    	H^*[z](\sigma) = A_k(z)\prod_{j=1}^{k} (1+z+\dots+z^{|\walpha_j|-1}).
	\end{align*}
\end{corollary}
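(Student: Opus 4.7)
\emph{Proof plan.} The plan is to unfold the definition
\[
    H^*[z](\sigma) \;=\; (1-z)\,\det(I - zM_\sigma)\sum_{t\geq 0}\chi_{t\Poly_{n-1}}(\sigma)\,z^t
\]
and compute each of the three factors separately---the Ehrhart series of the fixed polytope via \Cref{thm_fixedpolytopes}, and the characteristic polynomial $\det(I-zM_\sigma)$ from the cycle structure of $\sigma$---then multiply and simplify.

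The first factor is essentially free. By \Cref{thm_fixedpolytopes}, $\Poly_{n-1}^\sigma$ is unimodularly equivalent to $\Poly_{k-1}$, and since $\chi_{t\Poly_{n-1}}(\sigma) = L_{\Poly_{n-1}^\sigma}(t)$, \Cref{ehrdA} gives
\[
    \sum_{t\geq 0}\chi_{t\Poly_{n-1}}(\sigma)\,z^t \;=\; \frac{A_k(z)}{(1-z)^k}.
\]

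The main step is the determinant computation, where the key idea is to identify $M_\sigma$ with the matrix representing the quotient of the permutation representation of $S_n$ on $\RR^n$ by the (necessarily $\sigma$-fixed) line spanned by $\be_1+\cdots+\be_n$. Concretely, the isomorphism $\RR^n/\RR(\be_1+\cdots+\be_n)\to \RR^{n-1}$ that sends $[\be_i]\mapsto \be_i$ for $i<n$ and $[\be_n]\mapsto -(\be_1+\cdots+\be_{n-1})$ intertwines $\sigma$ with $M_\sigma$; this is a routine check directly against the definition of $M_\sigma$. Writing $P_\sigma$ for the $n\times n$ permutation matrix of $\sigma$, its block-diagonal cycle decomposition yields $\det(I - zP_\sigma)=\prod_{j=1}^k(1-z^{|\walpha_j|})$, and peeling off the factor $(1-z)$ contributed by the trivial subrepresentation produces
\[
    \det(I - zM_\sigma) \;=\; \frac{1}{1-z}\prod_{j=1}^k\bigl(1-z^{|\walpha_j|}\bigr).
\]

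Assembling the three factors, the prefactor $(1-z)$ cancels the denominator of the determinant, so
\[
    H^*[z](\sigma) \;=\; A_k(z)\prod_{j=1}^k\frac{1-z^{|\walpha_j|}}{1-z},
\]
and applying the geometric-series identity $(1-z^{|\walpha_j|})/(1-z) = 1+z+\cdots+z^{|\walpha_j|-1}$ to each factor yields the claim. The only real conceptual hurdle is the identification of $M_\sigma$ with the standard $(n-1)$-dimensional $S_n$-representation; once this is in place, the argument reduces to a short sequence of telescoping cancellations.
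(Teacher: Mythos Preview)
Your proof is correct and follows the same strategy as the paper: compute $\det(I-zM_\sigma)$ from the cycle structure of $\sigma$, invoke the Ehrhart series of the fixed polytope via \Cref{thm_fixedpolytopes} and \Cref{ehrdA}, and multiply. Your intermediate steps are in fact more accurate than the paper's: your identification of $M_\sigma$ with the standard $(n-1)$-dimensional representation correctly yields $\det(I-zM_\sigma)=\frac{1}{1-z}\prod_j(1-z^{|\walpha_j|})$, a polynomial of degree $n-1$ as it must be for an $(n-1)\times(n-1)$ matrix, whereas the paper's proof writes the degree-$n$ expression $\prod_j(1-z^{|\walpha_j|})$ for this determinant and then compensates by using $\Ehr_{\Poly_{n-1}^\sigma}(z)=A_k(z)/(1-z)^{k+1}$ rather than the correct $A_k(z)/(1-z)^{k}$---two slips that cancel to give the same final formula.
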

\begin{proof}
	Let $\sigma\in S_n$. 
	Then
	\begin{align*}
		\det(I - M_\sigma\cdot z) &= \frac{\prod_{i=1}^{k}\left(1-z^{|\walpha_i|}\right)}{1-z} = \frac{\prod_{i=1}^{k}(1-z)\left(1+z+\dots +z^{|\walpha_i|-1}\right) }{1-z}\\
		&= (1-z)^{k-1}\prod_{i=1}^{k}\left(1+z+\dots +z^{|\walpha_i|-1}\right).
	\end{align*}
	Now, we know that the Ehrhart series of $\Poly_{n-1}^\sigma$ is
	\begin{align*}
		\Ehr_{\Poly_{n-1}^\sigma}(z) = \frac{A_k(z)}{(1-z)^{k}}.
	\end{align*}
	Since the denominator of the equivariant Ehrhart series of $\sigma$ has an extra factor of $1-z$, the only difference is the product of sums of powers of $z$. In particular,
	\begin{align*}
		H^*[z](\sigma) &= A_k(z)\prod_{i=1}^{k}\left(1+z+\dots +z^{|\walpha_i|-1}\right). \qedhere
	\end{align*}
\end{proof}

This equivariant $H^*$-series shows up elsewhere, for instance in \cite[Proposition 6.6]{shareshianwachseulqu}, as the Frobenius characteristic of the permutation representation of the toric variety associated to the permutahedron.
	\begin{table}
	\begin{center}
	\begin{tabular}{|c|c|c|c|}
		\hline
		Cycle type of $\sigma\in S_4$ & $L_{\Poly_{3}^\sigma}(t)$ & $\displaystyle\sum_{t\geq 0} \chi_{t\Poly_3}(g) z^t$ & $H^*[z](\sigma)$ \\
		\hline & & & \\[-6pt]
		$(1,1,1,1)$ & $4t^3+6t^2+4t+1$ & $\frac{(1+11z+11z^2+z^3)(1)}{(1-z)^4}$ & $1+11z+11z^2+z^3$ \\[6pt]
		\hline & & & \\[-6pt]
		$(2,1,1)$ & $3t^2+3t+1$ & $\frac{(1+4z+z^2)(1+z)}{(1-z)(1-z)(1-z^2)}$ & $1+5z+5z^2+z^3$ \\[6pt]
		\hline & & & \\[-6pt]
		$(2,2)$ & $2t+1$ & $\frac{(1+z)(1+z)(1+z)}{(1-z^2)(1-z^2)}$ & $1+3z+3z^2+z^3$ \\[6pt]
		\hline & & & \\[-6pt]
		$(3,1)$ & $2t+1$ & $\frac{(1+z)(1+z+z^2)}{(1-z)(1-z^3)}$ & $1+2z+2z^2+z^3$ \\[6pt]
		\hline & & & \\[-6pt]
		$(4)$ & 1 & $\frac{(1)(1+z+z^2+z^3)}{1-z^4}$ & $1+z+z^2+z^3$ \\[6pt]
		\hline
	\end{tabular}
	\end{center}
	\caption{The equivariant $H^*$-series of $\Poly_{3}$.}
	\end{table}


\subsection{Two equivariant Ehrhart theories} \label{sec_2eet}

In \cite[Section 3.1]{equivarianttechniques} the equivariant Ehrhart theory of $\Zono_{C_n}$ is studied with respect to a particular group action. The automorphism group of the graph, $\Aut(C_n)$, acts on $[n]$ to shuffle the nodes of the graph while leaving the edge set invariant, which induces a linear action on $\RR^{n}$ under which $\Zono_{C_n}$ is invariant. Observe that $\Aut(C_n)\neq S_n$: consider the transposition $\sigma = (23)$ acting on $C_4$. Permuting nodes $2$ and $3$ changes the edge set by replacing edges $12$ and $34$ with edges $13$ and $24$, as seen in \Cref{cyclegraph23}.
\begin{figure}[h!]
	\centering
	\includegraphics[width=0.26\textwidth]{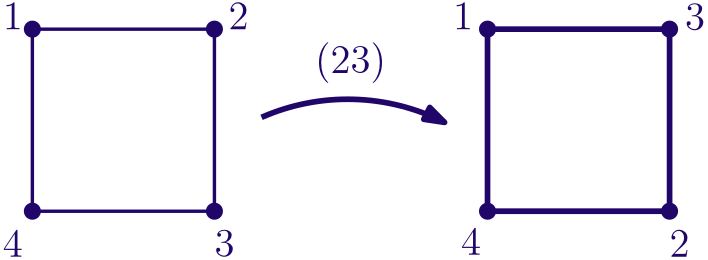}
	\caption{$C_4$ with $(23)$ applied.}
	\label{cyclegraph23}
\end{figure}

Now, by \Cref{prop: zono is graph zono}, this action by $\Aut(C_n)$ also gives an equivariant Ehrhart theory for $\Poly_{n-1}$. The $\Aut(C_n)$ action is very different from the $S_n$ action we have already studied. As an example, consider $n=4$ and $\sigma = (24)$. By \Cref{thm_fixedpolytopes}, we know that $\Poly_{4-1}^{\sigma}$ is a hexagon, since $\sigma = (1)(3)(24)$. On the other hand, \cite[Theorem 3.11]{equivarianttechniques} tells us that the fixed polytope under the $\Aut(C_n)$ action has as many vertices as there are acyclic orientations of the $\sigma$-connectivity graph $C_{C_4}(\sigma)$, shown in \Cref{24acyclic}.
\begin{figure}[h!]
	\centering
	\includegraphics[width=0.17\textwidth]{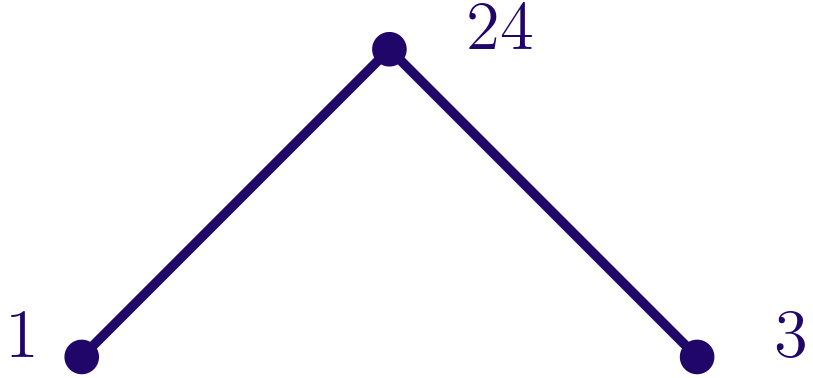}
	\caption{The $\sigma$-connectivity graph $C_{C_4}(\sigma)$.}
	\label{24acyclic}
\end{figure}
As there are 4 acyclic orientations of $C_{C_4}(\sigma)$, the corresponding fixed polytope has 4 vertices, not 6. The two different fixed polytopes are shown in \Cref{24fixed}.
\begin{figure}[h!]
	\centering
	\includegraphics[width=0.3\textwidth]{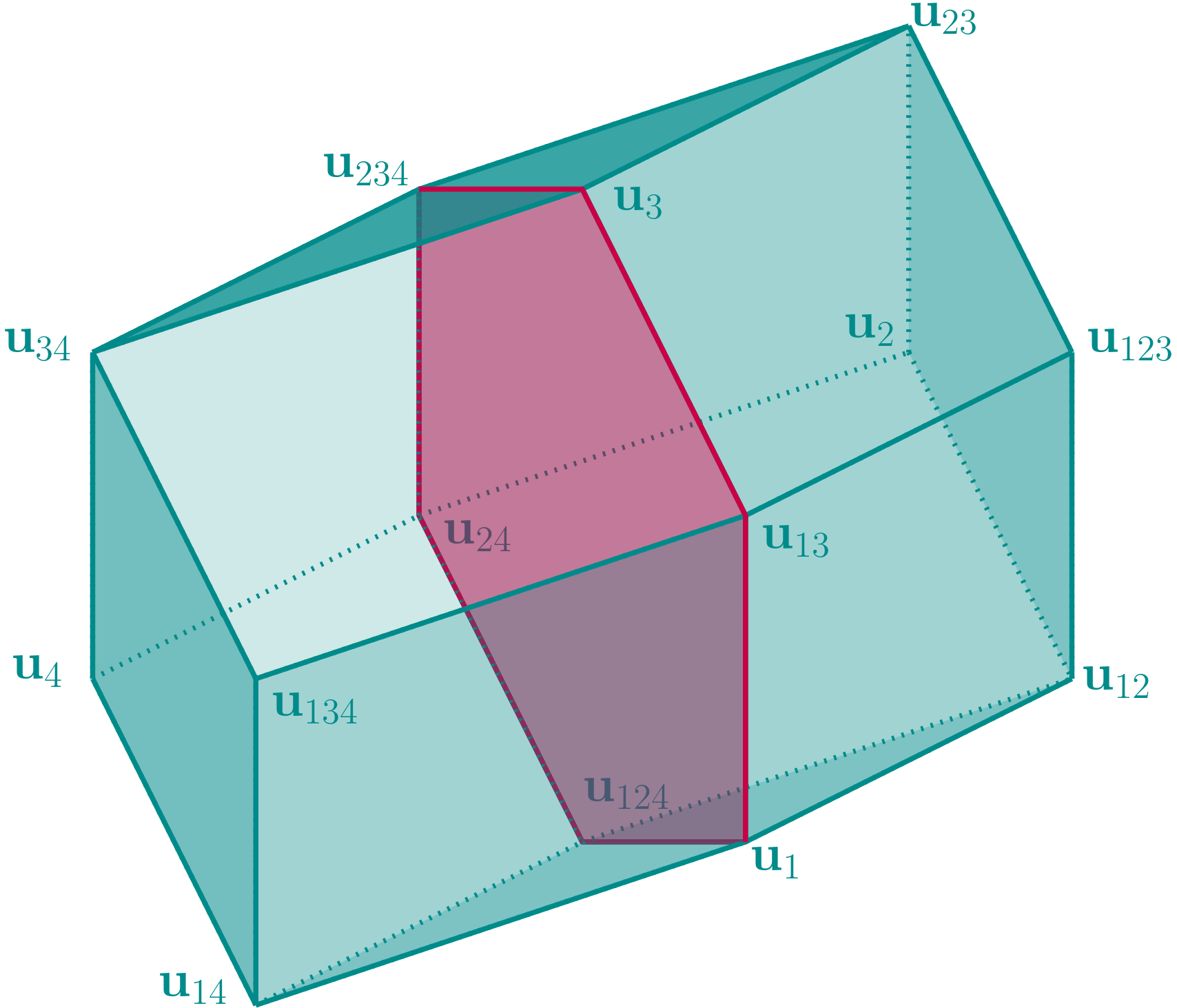}
	\hspace{24pt}
	\includegraphics[width=0.3\textwidth]{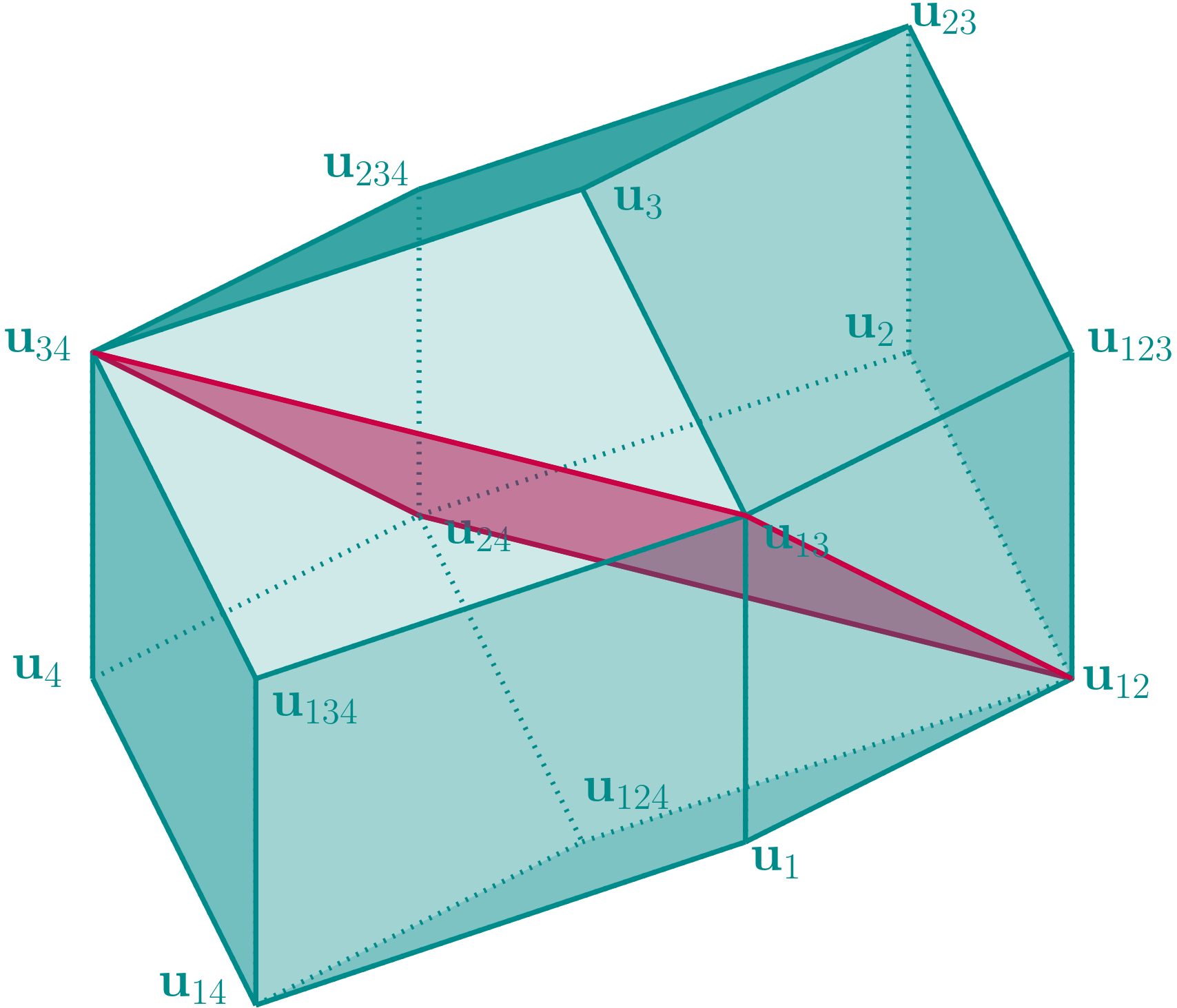}
	\caption{$\Poly_{3}^{(24)}$ versus $\Zono_{C_4}^{(24)}$.}
	\label{24fixed}
\end{figure}


\section*{Acknowledgments}

We would like to thank the organizers of the REACT 2021 workshop which led us to these polytopes. We are also grateful to Matthias Beck for posing this problem and to Matthias Beck and John Shareshian for many fruitful conversations.
Laura Escobar was partially supported by NSF Grant DMS-1855598 and NSF CAREER Grant DMS-2142656, DMS-2521270. Jodi McWhirter was partially supported by NSF Grant DMS-1855598.


\small

\bibliographystyle{amsplain}
\bibliography{bib}

\setlength{\parskip}{0cm} 

\end{document}